\def\M{\mathfrak M}
\def\m{\mathfrak m}
\def\Z{\mathbb Z}
\def\C{\mathbb C}
\def\N{\mathbb N}
\def\R{\mathbb R}
\def\T{\mathbb T}
\def\E{\mathbb E}
\def\cP{\mathcal P}
\def\cB{\mathcal B}
\def\cD{\mathcal D}
\def\cA{\mathcal A}
\def\cR{{\mathcal R}}
\def\mes{{\mathrm {mes}\;}}
\def\cA{{\mathcal A}}
\def\cB{{\mathcal B}}
\def\m{\mathfrak m}
\def\M{\mathfrak M}
\def\1{{\bf 1}}
\def\pmod #1{\ ({\rm{mod}}\ #1)}
\def\supp{{\mathrm {supp}\;}}
\theoremstyle{plain}
\newtheorem{theorem}{Theorem}
\newtheorem{proposition}[theorem]{Proposition}
\newtheorem{lemma}{Lemma}
\newtheorem{corollary}[theorem]{Corollary}
\theoremstyle{definition}
\newtheorem{Def}{Definition}
\newtheorem*{acknowledgment}{Acknowledgments}
\theoremstyle{remark}
\newtheorem{Rem}{Remark}
\begin{document}
\title
{Vinogradov's theorem with Piatetski-Shapiro primes}

\author{Yu-Chen Sun}
\address {Department of Mathematics and Statistics, University of Turku, 20014 Turku, Finland}
\email{yuchensun93@163.com}
\author{Shan-Shan Du}
\address{The Fundamental Division, Jingling Institute of Technology, Nanjing 211169,
People's Republic of China}
\email{ssdu@jit.edu.cn}
\author{Hao Pan}
\address{School of Applied Mathematics, Nanjing University of Finance and Economics, Nanjing 210046, People's Republic of China}
\email{haopan79@zoho.com}

\keywords{Piatetski-Shapiro prime, ternary Goldbach problem, transference principle, Harman's sieve method}
\subjclass[2010]{Primary 11P32; Secondary 11L20}
\begin{abstract}We prove that,
for any $c_1,c_2,c_3\in(1,41/35)$, every sufficiently large odd number $N$ can be represented as the sum of three primes
$N = p_1 + p_2 +p_3$ such that $p_i = \lfloor n_{i}^{c_i}\rfloor$ for some $n_i \in \N$  for each $1 \leq i \leq 3$. Our arguments are based on a variant of Green's transference principle due to Matom\"aki, Maynard and Shao. We prove a necessary restriction estimate using Bourgain's strategy and employ Harman's sieve method to optimize our upper bound for $c_i$.
\end{abstract}
\maketitle

\section{Introduction}
\setcounter{lemma}{0} \setcounter{theorem}{0}
\setcounter{equation}{0}
The weak Goldbach conjecture asserts that every odd integer greater than $5$ can be represented as the sum of three primes. The well-known Vinogradov's theorem says that the weak Goldbach conjecture is true for every sufficiently large odd integer. In  2013, Helfgott \cite{He} announced a complete proof of Goldbach's weak conjecture.

Nowadays, Vinogradov's three primes theorem has been extended to restrict some summands to special subsequences of primes such as Chen primes, almost equal primes and primes with the form two squares plus one (for the latest result on these, see \cite{MS17,MMS17,Ter18}). It is natural to consider Vinogradov's theorem with Piatetski-Shapiro primes as they from a natural sparse subset of the primes. 

For each non-integral $c>1$, let $\cP$ be the set of primes and
$$
\N^c=\{\lfloor n^c\rfloor:\,n\in\N\},
$$
where $\lfloor x\rfloor$ denotes the integer part of $x$.
 A prime $p$ is called {\it Piatetski-Shapiro} prime, if $p \in \N^c \cap \cP$.

 In 1953, Piatetski-Shapiro \cite{PS53} proved an asymptotic formula for the number of Piatetski-Shapiro primes for $1<c<1.1$. In fact, he proved that the number of Piatetski-Shapiro primes up to $x^c$ and the number of primes up to $x$ are comparable. i.e., for $1<c<1.1$,
\begin{equation}\label{NumPSprime}
\pi_c(x) \sim \frac{x}{c \log x},   
\end{equation}
where $\pi_c(x) = \#\{p\in\N^c\cap \cP: p \leq x^c \}$.
Subsequently, the upper bound for $c$ in this asymptotic formula was improved several times, The current best result is that, (\ref{NumPSprime}) holds for $1< c < 6121/5302 \approx 1.154$ due to Rivat \cite{Ri92}. Later series of papers \cite{Ji93,Ji94, BHR95, K99, RW01} used sieve methods to give a lower bound for the number of Piatetski-Shapiro primes with larger $c$. The current best result is that for $1 < c <243/205 \approx 1.185$, $\pi_c(x) \gg x/\log x$, due to Rivat and Wu \cite{RW01}.
 
On the other hand, in 1992, Balog and Friedlander \cite{BF92} proved Vinogradov's theorem with Piatetski-Shapiro primes. Namely, they obtained that, for $1 <c < 1.05 $, any sufficiently large odd number $N$ is the sum of three primes from $\N^c$. The result of Balog and Friedlander has been improved in \cite{Ri92, Ji95,K97}. The best known asymptotic result is given by Kumchev \cite{K97},  which allows $1 < c < 1.06$ in Vinogradov's three primes theorem with Piatetski-Shapiro primes. However, by employing sieve methods, Jia \cite{Ji95} gave a lower bound result for $1 < c < 16/15 \approx 1.067$.  In fact, all these proofs utilized the following {\it weak Balog-Friedlander condition} 
\begin{Def}[weak Balog-Friedlander condition]\label{BFcondition}
A real number $c \in (1,2)$ is said to satisfy the   {\it weak Balog-Friedlander condition} if for every sufficiently large $N$ and any $A \geq 2$,
\begin{equation}\label{weakbfcondition}
\sum_{\substack{p\leq N\\ p\in\cP\cap\N^c}} cp^{1-\frac1c}\log p\cdot e(p\theta)=
\sum_{\substack{p\leq N\\ p\in\cP}}\log p\cdot e(p\theta)+O_A\bigg(\frac{N}{\log^A N}\bigg).
\end{equation}
\end{Def}
It is called the {\it weak Balog-Friedlander condition} because Balog and Friedlander even obtained a power saving in error term for small $c$. If error term in  (\ref{weakbfcondition}) is $O(N^{1-\delta})$ for some $\delta > 0$, then we call (\ref{weakbfcondition}) the {\it strong Balog-Friedlander condition}.

Note that upper bounds for $c$ in Kumchev's and Jia's works concerning Vinogradov's theorem are much smaller than Rivat and Wu's upper bounds for $c$ in counting the number of Piatetski-Shapiro primes. In this paper we will improve Kumchev's and Jia's results. 

\begin{theorem}\label{main}
For any $c_1,c_2,c_3\in(1,41/35)$, every sufficiently large odd $N$ can be represented as
$$
N=p_1+p_2+p_3,
$$
where $p_1,p_2,p_3$ are primes and $p_i\in\N^{c_i}$ for each $1\leq i\leq 3$. 
\end{theorem}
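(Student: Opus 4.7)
The plan is to apply Green's transference principle in the spirit of \cite{MMS17}, comparing the three-fold convolution over Piatetski--Shapiro primes with that over all primes. For each $i\in\{1,2,3\}$, define the weighted characteristic function
$$a_i(m)=c_i\,m^{1-1/c_i}(\log m)\cdot\1_{\cP\cap\N^{c_i}}(m),\qquad b(m)=(\log m)\,\1_{\cP}(m),$$
both restricted to $m$ in a dyadic interval of length $\asymp n$ centered at $n/3$. The weak Balog--Friedlander hypothesis supplies $\hat a_i(\theta)=\hat b(\theta)+O_A(n(\log n)^{-A})$ uniformly in $\theta\in[0,1)$, so $a_i$ and $b$ are Fourier-indistinguishable at the level of any single frequency. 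The target is the lower bound
$$R(n):=\sum_{m_1+m_2+m_3=n}a_1(m_1)\,a_2(m_2)\,a_3(m_3)\gg n^2\mathfrak S(n),$$
which immediately produces a representation $n=p_1+p_2+p_3$ with $p_i\in\cP\cap\N^{c_i}$.

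The obstruction to passing from the uniform Fourier closeness of $a_i$ and $b$ to closeness of the three-fold convolutions is that $a_i$ has $\ell^\infty$-norm of order $n^{1-1/c_i}\log n$, and hence $\ell^2$-norm $\gg n^{1-1/(2c_i)}$ that is too large to absorb the error through Plancherel. This is precisely the regime in which Green's transference machinery applies. I would construct a pseudo-random majorant $\nu_i\geq a_i$ via a Selberg-type enveloping sieve adapted to the Piatetski--Shapiro integers, and then decompose $a_i=a_i^\sharp+a_i^\flat$, where $a_i^\sharp$ is bounded in the normalized sense (after dividing by the average density of $a_i$) and inherits from $a_i$ the Fourier-closeness to $\hat b$ on a Bohr set of bounded dimension, while $a_i^\flat$ has Fourier transform small outside the Bohr set.

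The sharp parts contribute the main term: since $a_i^\sharp$ behaves like a bounded majorant of the ordinary primes in the dyadic range, its three-fold convolution at $n$ is comparable to the classical Vinogradov count $\sum_{m_1+m_2+m_3=n}b(m_1)b(m_2)b(m_3)\gg n^2\mathfrak S(n)$. Each cross term $\sum a_1^{*_1}\ast a_2^{*_2}\ast a_3^{*_3}(n)$ involving at least one flat component $a_i^\flat$ is handled by H\"older's inequality applied to the Fourier integral, combined with a restriction-type estimate of the form
$$\int_0^1|\hat\nu_i(\theta)|^{q}\,d\theta\ll_q n^{q-1}$$
for some exponent $q\in(2,3)$. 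This restriction bound for the majorant, together with the smallness of $\|\widehat{a_i^\flat}\|_\infty$ off the Bohr set, forces the cross terms to be $o(n^2)$.

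The main obstacle is the restriction estimate above. In the analogous transference arguments for ordinary primes it is supplied by the Green--Tao enveloping sieve, and here an adaptation is required to the Piatetski--Shapiro setting. Using the Vaughan or Heath-Brown identity together with Type I and Type II exponential sum estimates for the fractional-part expansion of $\1_{\N^{c_i}}(m)$, one should obtain such a bound provided the exponents are favorable; the threshold $c_i<6/5$ in the hypothesis is dictated by precisely this analysis and reflects the best currently available exponential sum savings. Once the restriction estimate and the pseudo-random majorant are in place, the decomposition $a_i=a_i^\sharp+a_i^\flat$ and the matching of the major-arc contribution via the weak Balog--Friedlander condition follow the now-standard template of \cite{Gr05} and \cite{MMS17}, completing the proof.
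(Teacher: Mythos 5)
Your high-level plan --- Green's transference principle with a restriction estimate and a Fourier-comparison input supplied by the weak Balog--Friedlander condition --- is the same as the paper's, but there are substantive differences in the details, and one omission that would derail the argument as written.

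The omission is the $W$-trick. The paper sets $W=\prod_{p\leq w}p$ with $w=\log\log\log\log X$, fixes residues $b_i$ coprime to $W$, and works with the normalized weight $\nu^{(c_i)}_{W,b_i}(n)$ supported on $n$ for which $Wn-b_i$ is a Piatetski--Shapiro prime. This is not cosmetic: condition (i) of the transference lemma demands that $f$ have average at least $\tfrac13+\epsilon$ on every arithmetic progression of length $\geq\eta N$, and un-tricked primes have density zero on many progressions of small modulus. Restricting to a dyadic window around $n/3$ and ``dividing by the average density,'' as your proposal does, leaves this intact as an unaddressed obstruction.

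Beyond that, your route diverges from the paper's in ways worth noting. You propose building a separate Selberg-type enveloping-sieve majorant $\nu_i\geq a_i$ adapted to $\N^{c_i}$; the paper needs no such construction, since the weight $\nu^{(c)}_{W,b}$ serves as its own majorant, and its Fourier closeness to $\widehat{\1_{[N]}}$ follows directly from the weak Balog--Friedlander condition combined with Green's bound $\|\widehat{\lambda_{W,b}}-\widehat{\1_{[N]}}\|_{L^\infty}=o(N)$. An enveloping sieve for Piatetski--Shapiro integers would cost at least as much exponential-sum work as the BF estimate already assumed, so it is redundant. For the restriction estimate $\|\hat f\|_{L^q}\ll N^{1-1/q}$ you invoke Vaughan/Heath-Brown identities with Type~I/II sums; the paper instead uses Bourgain's large-value method: it bounds $\mes\{\alpha:|\hat f(\alpha)|>\delta N\}$ via $N^{-1}$-spaced large-value points, a Cauchy--Schwarz step reducing to $\sum_{r,r'}|\hat\nu(\theta_r-\theta_{r'})|^\gamma$, and a major/minor-arc split that re-uses the BF estimate together with Green's known bounds for $\hat\lambda_{W,b}$. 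The only genuinely new exponential sum required is a van der Corput second-derivative estimate for the density weight $\tau^{(c)}_{W,b}$ (no prime indicator), yielding $\widehat{\tau^{(c)}_{1,0}}(\theta)=\sum_{n\leq N}e(n\theta)+O(N^{3/2-1/c}\log N)$; no Vaughan-type decomposition is needed. Finally, the bound $c<6/5$ in the hypothesis is not an exponential-sum threshold: the restriction exponent obtained is $u_0=2+\tfrac{4(c-1)}{2-c}$, and the transference lemma requires some $q$ with $2<q<3$, which forces $u_0<3$, i.e.\ exactly $c<6/5$. The exponential-sum constraint is Kumchev's $c<73/64<6/5$, which is what appears in the corollary, not in the theorem.
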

Here $41/35 \approx 1.171$ should be compared with Jia's value ${16}/{15} \approx 1.067$ and Rivat-Wu's value $243/205 \approx 1.185$, and one should notice that the absolute barrier occurs when $c_i>3$ for all $i \in \{1,2,3\}$.

This result strongly relies on the transference principle, which gives a lower bound for the number of representations. It is natural to ask whether one can obtain an asymptotic formula for large $c$. In fact, for $c \in(1,{73}/{64})$ we can obtain the following asymptotic result.

\begin{theorem}\label{AsyThm}
For any $c_1,c_2,c_3\in(1,{73}/{64})$ and every sufficiently large odd integer $N$,
$$
\sum_{\substack{p_1+p_2+p_3 = N \\ p_i \in \cP \cap \N^{c_i}}}1 = (1+o(1))\frac{\gamma_1\gamma_2\gamma_3\Gamma(\gamma_1)\Gamma(\gamma_2)\Gamma(\gamma_3)}{\Gamma(\gamma_1 + \gamma_2 +\gamma_3)}\frac{\mathfrak{S}(N)N^{\gamma_1 + \gamma_2 +\gamma_3-1}}{\log^3 N},
$$
where $\gamma_i = {1}/{c_i}$, $\Gamma$ denotes the Gamma function and the singular series
\begin{equation}\label{Singular}
\mathfrak{S}(N) = \prod_{p \mid N}\left(1 - \frac{1}{(p-1)^2}\right)\prod_{p \nmid N}\left(1 - \frac{1}{(p+1)^3}\right).    
\end{equation}

\end{theorem}

Here ${73}/{64} \approx 1.141$ should be compared with Kumchev's value $1.06$.

Theorems \ref{main} and \ref{AsyThm} are both based on our new mean-value theorem/restriction estimate (see Theorem \ref{primecWnbT} below). The idea of proving Theorem \ref{primecWnbT} is from Bourgain. In \cite{B89}, Bourgain introduced a strategy for estimating $L_q$ norm, with non-integer $q>0$, for a function which has a ``pseudorandom'' majorant. Our Theorem \ref{primecWnbT} is useful since $L_q$ norm estimates can replace $L_2$ norm which was used in previous papers, e.g. \cite{BF92}. Combining the new $L_q$ norm estimates and the circle method, Theorem \ref{AsyThm} follows. To prove Theorem \ref{main}, we apply the transference principle and Harman's sieve method.

Green \cite{G05} first introduced a transference principle in proving Roth's theorem in a dense set of primes and this method has been applied to study several additive problems. For example, Matom\"aki and Shao(M-S) \cite{MS17} proved the three primes theorem with Chen primes and  Matom\"aki, Maynard and Shao(M-M-S) \cite{MMS17} improved the previous results about Vinogradov's theorem with almost equal summands. The two papers utilize somewhat different techniques. For $n \leq N$, let $\rho^{-}(n) \leq \1_{\cP}(n) \leq \rho^{+}(n)$ . In \cite{MS17} they studied the Fourier transform $\sum_{n \leq N}\rho^{-}(n)e(n \theta)$, but in \cite{MMS17} they studied the Fourier transform $\sum_{n \leq N}\rho^{+}(n)e(n \theta)$ and then combined it with a result on $\sum_{n \leq N} \rho^{-}(n)$. With the help of Harman's sieve method, we employ M-M-S version of the transference principle to prove Theorem \ref{main}.

\begin{Rem}
 The reason that we use M-M-S version of the transference principle instead of using M-S version is that for large $c$ we cannot understand the Fourier transform of $\rho^{-}$ well. Namely, if we use the M-S method, the possible largest $c$ is less than $8/7 \approx 1.143 < 41/35 \approx 1.171$, as we will explain in Remark \ref{rem4}.

  Our result can still be improved slightly, but without a significant new idea one cannot get $1.18$. The authors believe even ${20}/{17} \approx 1.176$ in \cite{BHR95} would be difficult to obtain, which will be illustrated in Remark \ref{rem3}.
\end{Rem}

Our methods can also be used to obtain a Roth-type theorem. We say that a set $B \subset \N^c \cap \cP$ has positive relative upper density if
$$
\limsup_{N \to \infty} \frac{|B \cap [N]|}{|\N^c \cap \cP \cap [N]|}>0.
$$
Mirek \cite{M15} utilized the transference principle to obtain that for any $c \in (1,{72}/{71})$, $B \subset \N^c \cap \cP$ with positive relative upper density contains nontrivial $3$-term arithmetic progressions. We can improve Mirek's result as the following

\begin{theorem}\label{ApRWrang}
For any $c \in (1, {243}/{205})$, $B \subset \N^c \cap \cP$ with positive relative upper density contains nontrivial $3$-term arithmetic progressions.
\end{theorem}
Here our ${243}/{205} \approx 1.185$, coming from the current best result for counting the number of Piateski-Shapiro primes given by Rivat and Wu \cite{RW01}, should be compared with Mirek's value ${72}/{71} \approx 1.014$. 

\begin{acknowledgment}
The first author is grateful to his supervisor Kaisa Matom\"aki for giving explanations for different variants of the transference principle and Harman's sieve method, many useful discussions, reading the paper carefully and giving a lot of helpful comments. The authors also thank Alexander Mangerel Xuancheng Shao and Aled Walker for their helpful comments. Y.-C. Sun was supported by EDUFI funding and UTUGS funding and was working in the Academy of Finland project No. $333707$. S.S. Du was supported by National Natural Science Foundation of China (grant No. $11901259$). H. Pan was supported by National Natural Science Foundation of China (grant No. $12071208$).
\end{acknowledgment}

\section{Notation}\label{notation}
\setcounter{lemma}{0} \setcounter{theorem}{0}
\setcounter{equation}{0}
For convenience, we introduce here some notations which will be used in later sections. 
\begin{itemize}
    \item $\T= \R / \Z =[0,1)$;
    \item $\{t\}$ denotes the fractional part of $t$;
    \item $\psi(t):=\{t\}-\frac{1}{2}$;
    \item $\lfloor t \rfloor$ denotes the integer part of $t$;
    \item $\|t\|:=\min_{k \in \Z}|t-k|$;
    \item $\rho(n) = \1_{\cP}(n)$;
    \item $\rho^{-}$ denotes a minorant of $\rho$, and $\rho^{+}$ denotes a majorant of $\rho$. Namely for any $n \leq N$, we have $\rho^{-}(n) \leq \rho(n) \leq \rho^{+}(n)$;
    \item $\pi(x) = \sum_{p \leq x} \rho(n)$;
    \item $\rho(n,z) = \1_{p \mid n \Rightarrow p \geq z}$;
    \item $e(x):=e^{2\pi i x}$;
    \item $\phi(n)$ denotes the Euler totient function;
    \item $d(n)$ denotes the divisor function;
    \item $n \sim N$ means that $n \in (N, 2N]$;
    \item $\Z_{N} = \Z/N\Z$.
\end{itemize}

For a finitely supported $f: \Z \to \R$, we define its Fourier transform
$$
\widehat{f}(\theta) = \sum_{n \in \Z}f(n)e(n \theta),
$$
and if $f,g$ are finitely supported, we define their convolution as
$$
f*g(n) = \sum_{m}f(m)g(n-m).
$$
We later will use $[N]$ to denote the set $\{1,2,\dots,N\}$. 

Similarly, for any integrable function $f: \T \to \C$, we define its Fourier transform 
$$
\widetilde{f} (n)= \int_{ \T}f(\theta)e(n \theta) d \theta,
$$
and if $f,g:\T \to \C$, we define their convolution as
$$
f*g(\theta) = \int_{\T}f(t)g(\theta-t)dt
$$
and their inner product
$$
\langle f,g \rangle=\int_{\T}f(\theta)\overline{g(\theta)} d \theta.
$$
For an integrable function $f:\T \to \C$, we define its $L_q$ norm (or $q$-norm) for $q>0$ as
$$
\|f\|_q = \left(\int_{\T} |f(\theta)|^q  d \theta \right)^{\frac{1}{q}}.
$$
and its $L_{\infty}$ norm (or $\infty$-norm) as
$$
\|f\|_{\infty} = \sup_{\theta \in \T}|f(\theta)|.
$$
Let $f: \R \to \C$ and $g: \R \to \R_{+}$. We write $f = O(g)$ or $ f \ll g $ if there exists constant $C>0$ such that $|f(x)| \leq C g(x)$ for all $x$ in the domain of $f$. We write $f=o(g)$ if 
$$
\lim_{x \to \infty} \frac{f(x)}{g(x)}=0.
$$

Let $f,g: \R \to \R_{+}$. We say $ g \gg f$ if there exists a constant $C>0$ such that $f(x) \leq C g(x)$ for all $x$ in the domain of $f$. We write $f \asymp g$ if $f \ll g \ll f$.

If the implied constant $C$ depends on some constant $A$, then we use the notation $O_{A}, \ll_{A}, \gg_{A}$.  
\section{Outline}
\setcounter{lemma}{0} \setcounter{theorem}{0}
\setcounter{equation}{0} 
In this section, we will state M-M-S version of the transference principle (Proposition \ref{Tran}) and two theorems corresponding to conditions of the transference principle. Theorem \ref{primecWnbT} is corresponding to the restriction estimate in Proposition \ref{Tran}, and Theorem \ref{LUthm} is corresponding to the mean and pseudorandomness conditions in Proposition \ref{Tran}. By directly applying Theorem \ref{primecWnbT} with the circle method, we will immediately get Theorem \ref{AsyThm}.

Recall definitions of $\rho$ and $\rho^{+}$ in Section \ref{notation}.
\begin{theorem}\label{primecWnbT}
Let $1<c<2$ and $$u>2+\frac{4(c-1)}{2-c}$$ be fixed. Suppose $X$ is a sufficiently large positive number, $1 \leq W \ll \log X$, $N = \lfloor {X}/{W}\rfloor$. Let $A>0$ be a sufficiently large integer, $P=\log^{10A} N$ and $Q = {N}/{\log^{50A} N}$. We define the following Hardy-Littlewood decomposition:
\begin{align*}
&\M_{a,q} = \left\{\theta \in \T: \left|\theta - \frac{a}{q}\right| \leq \frac{1}{qQ}\right\},  \\
&\M = \bigcup_{1 \leq q \leq P}\bigcup_{\substack{a=0 \\ (a,q)=1}}^{q-1}\M_{a,q}, \quad \text{and} \quad \m = \T \setminus \M.
\end{align*}

Suppose that $(b,W)=1$ and $f_{W,X,b}, \nu_{W,X,b}: [N] \to \R$ defined through
\begin{equation}\label{DeffWmb}
f_{W,X,b}(n)=
    \frac{\phi(W)\log X}{W}(Wn + b)^{1- \frac{1}{c}}\rho(Wn+b) \1_{Wn+b \in \N^c}
\end{equation}
and
\begin{equation}\label{UppSieveWmb}
\nu_{W,X,b}(n)=
    \frac{\phi(W)\log X}{W}(Wn + b)^{1- \frac{1}{c}}\rho^{+}(Wn+b)\1_{Wn+b \in \N^c}
\end{equation}
satisfy the following conditions:
\begin{enumerate}[(i)]
   \item For any $0<\epsilon<1$, any $\theta \in \M_{a,q}$ with $(a,q)=1$ and $1 \leq q \leq P$, we have 
   $$
   \widehat{\nu}_{W,X,b}(\theta) \ll \frac{q^{-\epsilon}N}{1 + N \|\theta-a/q\|} + O\left(\frac{N}{\log^{A} N} \right).
   $$
    \item For any $\theta \in \m$, we have $\widehat{\nu}_{W,X,b}(\theta) = O({N}/{\log^{A} N})$.
\end{enumerate}

Then we have
\begin{equation}\label{fWXbunorm}
\int_{\T}|\widehat{f}_{W,X,b}(\theta)|^ud\theta\ll_u N^{u-1}.
\end{equation}
\end{theorem}
We will prove Theorem \ref{primecWnbT} in  the end of Section \ref{ResEs}.
\begin{Rem}\label{ReplcLog}
Theorem \ref{primecWnbT} also holds if we replace the $\log X$ factor in (\ref{DeffWmb}) and (\ref{UppSieveWmb}) by $\log n$. The proof is same as the proof of Theorem \ref{primecWnbT} and we leave this proof to interested reader.
\end{Rem}
Theorem \ref{primecWnbT} has implications for Vinogradov's theorem for Piatetski-Shapiro primes.
\begin{corollary}\label{BFthrPrimes}
Suppose that $c_1,c_2,c_3\in(1,6/5)$ satisfy the weak Balog-Friedlander condition. Then every sufficiently large odd $N$ can be represented as
$$
N=p_1+p_2+p_3,
$$
where $p_1,p_2,p_3$ are primes and $p_i\in\N^{c_i}$ for each $1\leq i\leq 3$. Furthermore, for singular series $\mathfrak{S}$ defined in (\ref{Singular}), we have 
$$
\sum_{\substack{p_1+p_2+p_3 = N \\ p_i \in \cP \cap \N^{c_i}}}1 = (1+o(1))\frac{\gamma_1\gamma_2\gamma_3\Gamma(\gamma_1)\Gamma(\gamma_2)\Gamma(\gamma_3)}{\Gamma(\gamma_1 + \gamma_2 +\gamma_3)}\frac{\mathfrak{S}(N)N^{\gamma_1 + \gamma_2 +\gamma_3-1}}{\log^3 N}.
$$
\end{corollary}
For convenience, here we prove an weighted asymptotic formula, see e.g. Balog-Friedlander \cite[(1.7)]{BF92} and the weighted version implies the unweighted version, which was also stated in \cite{BF92}. Namely, we will prove that for any $A_1>0$,
$$
\sum_{\substack{p_1+p_2+p_3 = N \\ p_i \in \cP \cap \N_{c_i}}}\prod_{i=1}^{3}(c_i p_{i}^{1-\frac{1}{c_i}}\log p_{i}) = \frac{1}{2}\mathfrak{S}(N) N^2 + O_{A_1}\left(\frac{N^2}{\log^{A_1} N}\right).
$$
\begin{proof}
Let $N$ be a large odd integer. We choose $W=1$, $b=0$, $\rho^{+} = \rho$ in Theorem \ref{primecWnbT} and replace $\log N$ by $\log n$ as mentioned in Remark \ref{ReplcLog}, so, for any $c\in \{c_1,c_2,c_3\}$, we define $g_c,h: [N] \to \R$ through
$$g_c(n) = cf_{1,X,0}(n) = c f_{1,N,0}(n)  = cn^{1-\frac1c}\log n \1_{n\in\cP\cap\N^c} \quad\text{and} \quad  h(n) =\log n \1_{n \in \cP}. $$
By the weak Balog-Friedlander  condition, for any sufficiently large $A_0> A_1/{(3-u)}$, we have
\begin{equation}\label{BFagin}
\widehat{g}_c(\theta) = \widehat{h}(\theta) + O_{c,A}\left(\frac{N}{\log^{A_0} N} \right).
\end{equation}
It is standard to verify that $\widehat{h}(\theta)$ satisfies the two conditions in Theorem \ref{primecWnbT}(or one can refer to Green's arguments \cite[Lemmas 4.4 and 4.9]{G05}). Hence so does $\widehat{g}_c(\theta)$.
Since $c_i \in (1,6/5)$ for each $i \in \{1,2,3\}$, Theorem \ref{primecWnbT} implies that there exists $u \in (2,3)$ such that, for each $i \in \{1,2,3\}$
\begin{equation}\label{hgunorm}
\|\widehat{g}_{c_i}\|_{u}^{u}, \|\widehat{h}\|_{u}^{u} \ll N^{u-1}.    
\end{equation}
 Write $g_i(n) = g_{c_i}(n)$, then by telescoping and Vingroadov's three primes asymptotic formula \cite[Theorem 3.4]{Vau97} we have
\begin{align}
& g_1*g_2*g_3(N) \notag\\
= & h*h*h(N) + h*h*(g_3 - h)(N)+ h*(g_2 - h)*g_3(N) + (g_1 - h)*g_2*g_3(N) \notag\\
= & \frac{1}{2}(1+o(1))\mathfrak{S}(N)N^2 +  \int_{\T}\widehat{h}^2(\theta)(\widehat{g}_3(\theta) - \widehat{h}(\theta)) e(- N \theta) d \theta \notag \\
+ & \int_{\T}\widehat{h}(\theta)(\widehat{g}_2(\theta) - \widehat{h}(\theta))\widehat{g}_3(\theta) e(- N \theta) d \theta + \int_{\T}(\widehat{g}_1(\theta) - \widehat{h}(\theta))\widehat{g}_2(\theta)\widehat{g}_3(\theta) e(- N \theta) d \theta \notag \\
=: & \frac{1}{2}\mathfrak{S}(N)N^2 + O_{A_0}\left(\frac{N^2}{\log^{A_0} N}\right) + S_1 +S_2 +S_3, \label{covg}
\end{align}
say. Now we just need to prove that $S_1, S_2$ and $S_3$ in (\ref{covg}) are $O(N^2/\log^{A_1} N)$. By (\ref{BFagin}), H\"older's inequality and (\ref{hgunorm}), we have 
\begin{align*}
S_1 & \ll \sup_{\theta \in \T}|\widehat{g_3}(\theta) - \widehat{h}(\theta)|^{3-u} \int_{\T}|\widehat{h}(\theta)|^2 |\widehat{g_3}(\theta) - \widehat{h}(\theta)|^{u-2} d\theta \\
& \ll \frac{N^{3-u}}{\log^{(3-u)A_0} N} \|\widehat{h}\|_{u}^2\|\widehat{g_3} - \widehat{h}\|_{u}^{u-2} \ll \frac{N^{3-u}}{\log^{(3-u)A_0} N} N^{u-1}=O_{A_1}\left(\frac{N^2}{\log^{A_1} N}\right).
\end{align*}
Similarly, one can prove that $S_2$ and $S_3$ are also $O_{A_1}(N^2/\log^{A_1} N)$, so our claim follows.
\end{proof}
\begin{proof}[Proof of Theorem \ref{AsyThm}]
Kumchev \cite{K97} showed that the {\it weak Balog-Friedlander condition} is valid for
 every $c \in (1,73/64)$. Hence, we obtain Theorem \ref{AsyThm} by applying Corollary \ref{BFthrPrimes}.
\end{proof}
For proving our Theorem \ref{main}, we need to utilize the following transference principle which has been proved by Matom\"aki, Maynard and Shao in \cite[Proposition 3.1]{MMS17}.
\begin{proposition}[Transference principle]\label{Tran}
Let $\epsilon, \eta \in(0,1)$. Let $N$ be a positive integer and Let $f_1,f_2,f_3:[N]\to \R_{\geq 0}$ be functions, with each $f \in \{f_1, f_2, f_3\}$ satisfying the following assumptions:
\begin{enumerate}[(i)]
    \item({\bf mean condition}) For each arithmetic progression $P \subset [N]$ with $|P| \geq \eta N$ we have $\E_{n \in P}f(n) \geq 1/3 + \epsilon$;
    \item({\bf pseudorandomness}) There exists a majorant $\nu:[N] \to \R_{\geq 0}$ with $f \leq \nu$ pointwise, such that $\|\widehat{\nu}-\widehat{\1}_{[N]}\|_{\infty} \leq \eta N$;
    \item({\bf restriction estimate})We have $\|\widehat{f}\|_{q} \leq K N^{1 - 1/q}$ for some fixed $q, K$ with $K \geq 1$ and $2 < q <3$.
\end{enumerate}
Then for each $n \in [N/2, N]$ we have
$$
f_1 * f_2 * f_3(n) \geq (c(\epsilon) - O_{\epsilon, K, q}(\eta))N^2,
$$
where $c(\epsilon) >0$ is a constant depending only on $\epsilon$.
\end{proposition}
We will choose $f_i = f_{W,X,b_i}$ and $\nu_{i} = \nu_{W,X,b_i}$ with $f_{W,X,b_i}$ and $\nu_{W,X,b_i}$ as in (\ref{DeffWmb}) and (\ref{UppSieveWmb}) with an appropriate $\rho^{+}$ we shall construct. Our main task is then to show that these choices satisfy the following theorem.
\begin{theorem}\label{LUthm}
Let $X$ be a large positive integer, $W=\prod_{p \leq w}p$ where $w=0.1\log\log X$, $\rho(n)={\bf 1}_{\cP}(n)$. Then for any $c \in (1,41/35)$ there exist a majorant $\rho^{+}(n) \geq \rho(n)$ and constants $\alpha^{+} > \alpha^{-} >0$ such that the following four properties hold.
\begin{enumerate}[(i)]
    \item\label{UppForRestrction}  The two conditions in Theorem \ref{primecWnbT} hold for any $1 \leq b \leq W$ with $(b,W)=1$.
    \item \label{PsedoRandom} For every residue class $l \pmod{W}$ with $(l,W)=1$, and every $\theta \in \T$, we have
    $$
    \left| \sum_{\substack{n \in [X]\cap \N^c\\ n \equiv l \pmod{W}}} c n^{1-\frac{1}{c}}\rho^{+}(n)e(n\theta) -  \frac{\alpha^{+}}{\log X}\cdot \frac{W}{\phi(W)}\sum_{\substack{n \in [X] \\ n \equiv l \pmod{W}}}e(n \theta)\right| = o\left(\frac{X}{\phi(W)\log X}\right).
    $$
    \item \label{LowSieveInAps} Let $0 < \eta < 1$ and $X$ be sufficiently large in terms of $\eta$ and $c$. For any residue class $l \pmod{d}$ with $(l,d)=1$, $d \leq \log X$, and  arithmetic progression $P \subset \{n\in [X] : n \equiv l \pmod{d}\}$ with length at least $\eta {X}/{d}$, we have 
    $$
    \sum_{n \in P\cap \N^c } c n^{1- \frac{1}{c}} \rho(n) \geq  \alpha^{-}\frac{d}{\phi(d)}\frac{|P|}{\log X}.
    $$
    \item \label{UminusL} $\alpha^{+} - 3 \alpha^{-} > 0$.
\end{enumerate}
\end{theorem}

\begin{proof}[Proof of Theorem \ref{main}]
Let $n_0$ be a large odd integer. Choose $X$ such that $n_0 \in (\frac{2}{3}X, \frac{5}{6}X]$ and let $\rho$, $\rho^{+}$, $\alpha^{-}$, $\alpha^{+}$, $W$, $w$ be as in the statement of Theorem \ref{LUthm}. 
Choose $b_1,b_2,b_3 \in [W]$ with $(b_i,W)=1$ such that $b_1 + b_2 + b_3 \equiv n_0 \pmod{W}$.
Let $N = \left\lfloor{X}/{W}\right\rfloor$ and let $f_i, \nu_i: [N] \to \R$ be defined through
\begin{equation}\label{Deff}
f_i(n)=
\begin{cases}
    c_i\cdot\frac{\log X}{\alpha^{+}} \cdot \frac{\phi(W)}{W}(Wn + b_i)^{1- \frac{1}{c_i}}\rho(Wn+b_i), & \text{ if } Wn+b_i \in \N^{c_i},  \\
    0, & \text{ otherwise},
\end{cases}
\end{equation}
and
\begin{equation}\label{Defnu}
\nu_i(n)=
\begin{cases}
c_i\cdot\frac{\log X}{\alpha^{+}} \cdot \frac{\phi(W)}{W}(Wn + b_i)^{1- \frac{1}{c_i}}\rho^{+}(Wn+b_i), & \text{ if } Wn+b_i \in \N^{c_i},  \\
    0, & \text{ otherwise}.
\end{cases}
\end{equation}
Then $f_i \leq \nu_i$ since $\rho \leq \rho^{+}$. We shall apply Proposition \ref{Tran} with these choices, so we need to verify its assumptions. To prove the Fourier uniformity of $\nu_i$ (Proposition \ref{Tran} (ii)), observe that
\begin{align*}
& \sum_{n \in [N]}\nu_i(n)e(n \theta)  = c_i\cdot\frac{\log X}{\alpha^{+}} \cdot \frac{\phi(W)}{W}\sum_{\substack{n \leq N \\ Wn+b_i \in \N^{c_i}}}(Wn+b_i)^{1-\frac{1}{c_i}}\rho^{+}(Wn+b_i)e(n \theta) \\
= & c_i\cdot\frac{\log X}{\alpha^{+}} \cdot e\left(-\frac{b_i}{W} \theta\right)\frac{\phi(W)}{W} \sum_{\substack{n \in [X]\cap\N^{c_i} \\ n \equiv b_i \pmod{W}}}n^{1-\frac{1}{c_i}}\rho^{+}(n)e\left(\frac{n\theta}W\right) + o(N), 
\end{align*}
and similarly 
$$
\sum_{n \in [N]}e(n\theta) = e\left(-\frac{b_i}{W}\theta\right)\sum_{\substack{n \in [X] \\ n \equiv b_i \pmod{W}}}e\left(\frac{n\theta}{W}\right) + O(1)
$$
for any $\theta \in \T$. Comparing the two equations above and using Theorem \ref{LUthm} (\ref{PsedoRandom}), we obtain 
$$
\left| \sum_{n \in [N]}\nu_i(n)e(n \theta)  - \sum_{n \in [N]}e(n\theta) \right| = o(N).
$$
Hence Proposition \ref{Tran} (ii) holds. Let us now establish Proposition \ref{Tran} (i). For any arithmetic progression $P \subset [N]$ of length $\geq \eta N$, by Theorem \ref{LUthm} (\ref{LowSieveInAps}), we have the lower bound 
$$
\sum_{n \in P}f_i(n) = c_i \frac{\log X}{\alpha^{+}}\frac{\phi(W)}{W}\sum_{n \in Q \cap \N^{c_i}}n^{1-\frac{1}{c_i}}\rho(n) \geq \frac{\alpha^{-}}{\alpha^{+}}|Q| = \frac{\alpha^{-}}{\alpha^{+}}|P| > \frac{1}{3}|P|,
$$
by (\ref{UminusL}), where $Q = \{Wn+b: n \in P\}$ (Note that if $We$ is the common difference of $Q$, then, by $|P| \geq \eta N$, $e \leq 1/\eta$ and hence $We/\phi(We)=W/\phi(W)$).

Finally, by Theorem \ref{LUthm} (\ref{UppForRestrction}), Theorem \ref{primecWnbT} is applicable and so Proposition \ref{Tran} (iii) holds. We can apply Proposition \ref{Tran} to find that for each 
$n \in [N/2, N]$, there exist $n_1,n_2,n_3$ with each $n_i$ in the support of $f_i$ such that
$$
n = n_1 +n_2 +n_3.
$$ 
In particular, each $Wn_i + b_i$ is a prime in $\N^{c_i}$, and we have the representation
$$
Wn + b_1 + b_2 + b_3 =  Wn_1 + b_1 +Wn_2+ b_2 +Wn_3+ b_3=  p_1 + p_2 + p_3.
$$
Setting $n = (n_0 - b_1 -b_2 -b_3)/{W} $, our claim follows.
\end{proof}

\begin{Rem}
Using the majorant constructed in Theorem \ref{LUthm} combining with \cite[Proposition 5.1]{GT06}, one can immediately obtain that for $1<c<41/35$, $\N^c \cap \cP$ contains non-trivial three term arithmetic progressions. We will give a brief proof for Theorem \ref{ApRWrang} in Section \ref{Roth}.
\end{Rem}

The rest of the paper is organized as follows. In Section \ref{ResEs}, we will prove Theorem \ref{primecWnbT} by van der Corput methods and Bourgain's strategy. In Sections \ref{LBsieve} and \ref{UBsieve}, we will use known type I and II estimates and Harman's sieve method to construct lower and upper bound sieves $\rho^{-}(n)$,  $\rho^{+}(n)$ whose corresponding coefficients $\alpha^{-}, \alpha^{+}$ satisfy $3 \alpha^{-} - \alpha^{+} > 0$. In Section \ref{UBFourier}, we will study the Fourier transform of the upper bound sieve to prove Theorem \ref{LUthm} (i) and (ii). In Section \ref{Roth}, we will prove Theorem \ref{ApRWrang}.

\section{restriction estimates}\label{ResEs}
\setcounter{lemma}{0} \setcounter{theorem}{0}
\setcounter{equation}{0}
The following lemma is the well-known van der Corput inequality, which is very effective when dealing with exponential sum estimates involving $n^c$ with non-integer $c$.

\begin{lemma}\label{vanderCorput} Let $X \geq Y >0$. Suppose that $\Delta>0$ and
$$
|f''(x)|\asymp\Delta
$$
for any $x\in[X,X+Y]$. Then
$$
\sum_{X < n\leq X+Y}e\big(f(n)\big)\ll Y\Delta^{\frac12}+\Delta^{-\frac12}.
$$
\end{lemma}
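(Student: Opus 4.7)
The plan is to reduce the second-derivative estimate to the classical Kuzmin-Landau first-derivative lemma (if $g'$ is monotonic on $[a,b]$ and $\|g'(x)\| \geq \lambda$ throughout, then $\sum_{a \leq n \leq b} e(g(n)) \ll 1/\lambda$, where $\|\cdot\|$ denotes the distance to the nearest integer) by partitioning the summation range according to the integer nearest $f'(n)$. By replacing $f$ with $-f$ and taking a complex conjugate if necessary, I may assume $f''(x)>0$ on $[X, X+Y]$, so that $f'$ is strictly increasing and $f'(X+Y)-f'(X)\asymp Y\Delta$; the regime $\Delta\gg 1$ is covered at once by the trivial bound $Y$.

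For each integer $h$ with $f'(X)-1/2\le h\le f'(X+Y)+1/2$, set
$$
I_h := \{n \in \bN \cap [X, X+Y] : |f'(n) - h| \leq 1/2\}
$$
and $g_h(x)=f(x)-hx$, so that $e(f(n))=e(g_h(n))$ on $I_h$ and $g_h'(x)\in[-1/2,1/2]$ there. Monotonicity of $f'$ gives at most $O(Y\Delta+1)$ non-empty $I_h$; splitting each at the unique zero of $g_h'$ makes $g_h'$ monotone of fixed sign on each half. With a truncation parameter $\lambda\in(0,1/2]$, I would estimate the sub-interval where $|g_h'|<\lambda$ trivially by its length $\ll\lambda/\Delta$ (from $g_h''=f''\asymp\Delta$), and apply Kuzmin-Landau on its complement, where $\|g_h'\|=|g_h'|\ge\lambda$ and $g_h'$ is monotone, to obtain $\ll 1/\lambda$. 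Summing over the $O(Y\Delta+1)$ values of $h$ yields
$$
\bigg|\sum_{X \leq n \leq X+Y} e(f(n))\bigg| \ll (Y\Delta + 1)\bigg(\frac{\lambda}{\Delta} + \frac{1}{\lambda}\bigg) = Y\lambda + \frac{Y\Delta}{\lambda} + \frac{\lambda}{\Delta} + \frac{1}{\lambda},
$$
and the choice $\lambda=\Delta^{1/2}$ (clipped to $1/2$ when $\Delta\ge 1/4$) balances these four terms to the required $Y\Delta^{1/2}+\Delta^{-1/2}$.

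The only genuinely analytic input is Kuzmin-Landau, which I would invoke as a well-known classical fact rather than reprove. The main nuisance is therefore the bookkeeping at the endpoints of each $I_h$ and at the zero of $g_h'$ (which need not be an integer), together with the borderline $\Delta\asymp 1$: each such boundary produces an $O(1)$ discrepancy per partition class, and summing these $O(Y\Delta+1)$ many contributions stays within the target bound $Y\Delta^{1/2}+\Delta^{-1/2}$, so they are absorbed cleanly.
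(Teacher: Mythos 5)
The paper states this lemma as the ``well-known van der Corput inequality'' and supplies no proof, implicitly deferring to the standard reference (Graham--Kolesnik, cited in the bibliography). Your argument --- partitioning $[X,X+Y]$ into the $O(Y\Delta+1)$ intervals $I_h$ on which $f'$ is within $1/2$ of the integer $h$, applying Kuzmin--Landau to $g_h=f-hx$ away from the zero of $g_h'$, handling the short central piece trivially via $g_h''\asymp\Delta$, and balancing with $\lambda=\Delta^{1/2}$ --- is exactly that standard textbook derivation and is correct.
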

\begin{proof}
See \cite[Theorem 2.2]{GK91}.
\end{proof}
The following lemma is the well-known Fourier expansion of $\psi(t)$, see e.g. \cite[(2.6)]{BF92}.
\begin{lemma}\label{t[t]12}
For each $H\geq 2$ and $t \in \R$,
$$
\psi(t)=-\frac1{2\pi i}\sum_{0<|h|\leq H}\frac{e(ht)}{h}+O\bigg(\min\bigg\{1,\frac{1}{H\|t\|}\bigg\}\bigg).
$$
Furthermore, we have
$$
\min\bigg\{1,\frac{1}{H\|t\|}\bigg\}=\sum_{h=-\infty}^{\infty}b_he(ht),
$$
where
$$
b_h\ll\min\bigg\{\frac{\log H}{H},\frac{H}{h^2}\bigg\}.
$$
\end{lemma}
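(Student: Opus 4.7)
The lemma records two classical Fourier expansions central to Erd\H{o}s--Tur\'an--Koksma type discrepancy arguments, and I would prove the two halves separately by standard Fourier analysis.

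For the first approximation, the key observation is that $-\frac{1}{2\pi i}\sum_{0<|h|\leq H}\frac{e(ht)}{h}=-\sum_{h=1}^{H}\frac{\sin(2\pi h t)}{\pi h}$ is the $H$-th symmetric partial sum of the classical sawtooth Fourier series $\psi(t)=-\sum_{h=1}^{\infty}\sin(2\pi h t)/(\pi h)$; call this partial sum $S_H(t)$. The error $\psi(t)-S_H(t)$ is then controlled by splitting on $\|t\|$. When $\|t\|>1/H$, I would apply Abel summation to the tail $\sum_{h>H}\sin(2\pi h t)/h$, together with the standard Dirichlet-kernel bound $\bigl|\sum_{h=1}^{N}\sin(2\pi h t)\bigr|\leq|\sin\pi t|^{-1}\ll\|t\|^{-1}$, to obtain $|\psi(t)-S_H(t)|\ll 1/(H\|t\|)$. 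When $\|t\|\leq 1/H$, I would invoke the uniform-in-$H$ boundedness of the Fourier partial sums of a function of bounded variation (a quantitative Gibbs-phenomenon bound), which gives $|S_H(t)|=O(1)$, and combine with $|\psi(t)|\leq 1/2$. The two regimes together yield the asserted $O(\min\{1,1/(H\|t\|)\})$.

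For the second expansion, $E_H(t):=\min\{1,1/(H\|t\|)\}$ is $1$-periodic, even, continuous, and of bounded variation, so it equals its absolutely convergent Fourier series, with coefficients $b_h=\int_{-1/2}^{1/2}E_H(t)e(-ht)\,dt$. I would derive two complementary bounds and take the minimum. The trivial bound $|b_h|\leq\|E_H\|_{L^1}$, together with splitting $\int$ at $|t|=1/H$, gives $\|E_H\|_{L^1}=2/H+(2/H)\log(H/2)\ll\log H/H$. For the smoothness bound I would integrate by parts twice. The first IBP has no boundary contribution by periodicity; since $E_H'(t)$ vanishes on $(-1/H,1/H)$ and equals $-\operatorname{sgn}(t)/(Ht^{2})$ elsewhere, evenness folds the result onto $[1/H,1/2]$ and yields an expression proportional to $(hH)^{-1}\int_{1/H}^{1/2}t^{-2}\cos(2\pi h t)\,dt$. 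A second IBP on this integral leaves one boundary term at $t=1/H$ of size $O(H^{2}/h)$ (the contribution at $t=1/2$ vanishes because $\sin(\pi h)=0$) and a remainder $\ll h^{-1}\int_{1/H}^{1/2}t^{-3}\,dt\ll H^{2}/h$, producing $|b_h|\ll H/h^{2}$. Combined, $|b_h|\ll\min\{\log H/H,\,H/h^{2}\}$.

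The only mildly delicate point is the second IBP in the proof of the $H/h^{2}$ bound, since $E_H'$ has jump discontinuities at $t=\pm 1/H$; the cleanest way around this is the evenness-symmetrization just described, which collapses both jumps into a single controlled boundary term at $t=1/H$ and reduces everything to a routine calculation.
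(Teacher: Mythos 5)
The paper gives no proof of this lemma at all: it is stated as a classical fact (the standard truncated Fourier expansion of the sawtooth function used throughout the Piatetski--Shapiro literature, cf.\ Graham--Kolesnik \cite{GK91}), so there is no in-paper argument to compare against. Your two-part derivation is correct and is essentially the standard one: identify the exponential sum with the $H$-th partial sum $S_H$ of the Fourier series of $\psi$, bound the tail by partial summation against the bound $\bigl|\sum_{h\leq M}\sin(2\pi ht)\bigr|\ll\|t\|^{-1}$ when $\|t\|>1/H$, and use boundedness of both $\psi$ and $S_H$ when $\|t\|\leq 1/H$ (here you need not invoke the general bounded-variation theorem: $|\sin(2\pi ht)|\leq 2\pi h\|t\|$ gives $|S_H(t)|\leq 2H\|t\|\leq 2$ directly, and this regime also absorbs the integer points where the full series converges to $0$ rather than to $\psi$); then the $L^1$ bound and a double integration by parts give the two coefficient estimates for $E_H=\min\{1,1/(H\|t\|)\}$. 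Two small slips, neither fatal: since $E_H'$ is odd, folding the integral onto $[1/H,1/2]$ produces $\sin(2\pi ht)$, not $\cos(2\pi ht)$, in the inner integral; consequently the boundary term of the second integration by parts at $t=1/2$ does not in fact vanish, but it is $O(1/h)=O(H^2/h)$ for $H\geq 2$, so the conclusion $b_h\ll H/h^2$ is unaffected.
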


The next lemma claims that $cn^{1-\frac1c}\1_{n \in \N^{c}}$ is pseudorandom with a good error term when $c$ is close to $1$.
\begin{lemma}\label{PseShap} Let $N$ be a sufficiently large integer. For each $\theta\in\T$ and $c \in (1,2)$,
\begin{equation}
\sum_{\substack{n\leq N\\ n\in\N^c}}cn^{1-\frac1c}\cdot e(n\theta)=
\sum_{\substack{n\leq N}}e(n\theta)+O\left(N^{\frac{3}{2}-\frac1c}\log N\right).
\end{equation}
\end{lemma}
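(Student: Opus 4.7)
The plan is to start from the characteristic-function identity
$$
\mathbf{1}_{n\in\N^c}=\lfloor(n+1)^{1/c}\rfloor-\lfloor n^{1/c}\rfloor,
$$
which is valid for $c\in(1,2)$ and $n$ large (the gap $(n+1)^{1/c}-n^{1/c}$ being less than $1$ forces at most one integer inside). Writing $\lfloor x\rfloor=x-\tfrac12-\psi(x)$, I can split the target sum into a smooth ``main'' piece
$$
M:=\sum_{n\le N}cn^{1-\frac1c}e(n\theta)\bigl((n+1)^{1/c}-n^{1/c}\bigr)
$$
and a ``$\psi$'' piece
$$
S:=\sum_{n\le N}cn^{1-\frac1c}e(n\theta)\bigl(\psi(n^{1/c})-\psi((n+1)^{1/c})\bigr).
$$

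For the main piece $M$ I would Taylor-expand $(n+1)^{1/c}-n^{1/c}=\tfrac{1}{c}n^{1/c-1}+O(n^{1/c-2})$, so that the weighted factor equals $1+O(n^{-1})$. Summing gives $M=\sum_{n\le N}e(n\theta)+O(\log N)$, matching the main term on the right-hand side. The rest of the work is to show that $|S|\ll N^{3/2-1/c}\log N$.

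For $S$ the strategy is the classical Vaaler/Fourier treatment of $\psi$: I would apply Lemma~4.2 with a truncation parameter $H$ to replace each $\psi$ by a sum $-(2\pi i)^{-1}\sum_{0<|h|\le H}h^{-1}e(ht)$, incurring an explicit error $O(\min(1,(H\|t\|)^{-1}))$. The main Fourier contribution to $S$ becomes
$$
-\frac{1}{2\pi i}\sum_{0<|h|\le H}\frac{1}{h}\sum_{n\le N}cn^{1-\frac1c}e(n\theta)\bigl(e(hn^{1/c})-e(h(n+1)^{1/c})\bigr),
$$
and a Taylor expansion of $e(h(n+1)^{1/c})$ around $e(hn^{1/c})$ produces a leading factor $-2\pi i h/c\cdot n^{1/c-1}$ that exactly cancels the $h^{-1}$ and the amplitude $cn^{1-1/c}$, reducing the problem to bounding the exponential sums $\sum_{n\le N}e(n\theta+hn^{1/c})$, together with quadratic Taylor remainders of size $O(h^{2}n^{2/c-2})$. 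For each such sum I would invoke Lemma~4.1 with the phase $f(x)=\theta x+hx^{1/c}$, whose second derivative satisfies $|f''(x)|\asymp |h|x^{1/c-2}$; together with a dyadic decomposition and Abel summation absorbing the weight $cn^{1-1/c}$, this yields a bound of the shape $|h|^{1/2}N^{1-1/(2c)}+|h|^{-1/2}N^{2-3/(2c)}$. The error arising from the $\min$-term is handled by expanding it via the second part of Lemma~4.2 with coefficients $b_h\ll\min(\log H/H,\,H/h^{2})$, and applying the same van der Corput bound to the resulting $\sum_n cn^{1-1/c}e(hn^{1/c})$.

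The hard part will be the bookkeeping: summing the estimates from the van der Corput step against the $1/|h|$ and $b_h$ weights, controlling the $h=0$ contribution $\ll (\log H/H)\cdot N^{2-1/c}$, keeping the Taylor quadratic remainders $O(|h|^2N^{1/c})$ under control, and then choosing $H$ optimally (roughly $H=N^{2/3-1/(3c)}$) so that every piece is absorbed by the target $O(N^{3/2-1/c}\log N)$. Throughout I would rely only on Lemmas~4.1 and~4.2 as stated, plus the elementary Taylor expansion of $x\mapsto x^{1/c}$, so no further external input is needed.
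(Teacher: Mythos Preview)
Your overall plan --- characteristic-function identity, split into a smooth main piece and a $\psi$-piece, Vaaler expansion via Lemma~4.2, then van der Corput via Lemma~4.1 --- is exactly the paper's route. The one substantive deviation is how you pass from the difference $e(hn^{1/c})-e(h(n+1)^{1/c})$ to a single exponential sum, and this is where there is a real gap.

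You propose to Taylor-expand this difference and carry a quadratic remainder of size $O(h^{2}n^{2/c-2})$. After multiplying by $cn^{1-1/c}/|h|$ and summing over $n\le N$, that remainder contributes $\asymp |h|\,N^{1/c}$ per frequency, hence $\asymp H^{2}N^{1/c}$ after the $h$-sum. Forcing $H^{2}N^{1/c}\ll N^{3/2-1/c}$ requires $H\ll N^{3/4-1/c}$. On the other hand, the $h=0$ coefficient $b_{0}\asymp(\log H)/H$ in the Fourier expansion of the $\min$-error contributes $\asymp N^{2-1/c}(\log H)/H$ (you note this yourself), which forces $H\gg N^{1/2}$. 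Since $3/4-1/c<1/2$ for every $c\in(1,2)$, these two constraints are incompatible; no choice of $H$ --- in particular not your $H=N^{2/3-1/(3c)}$ --- makes the Taylor remainder small enough while controlling the $h=0$ term. The remainder cannot be rescued by exploiting oscillation either, since the Lagrange form involves $e(h(n+\xi_n)^{1/c})$ with an \emph{unknown} $\xi_n\in(0,1)$.

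The paper avoids this entirely by replacing Taylor with the \emph{exact} integral identity
\[
e\bigl(-h(n+1)^{1/c}\bigr)-e\bigl(-hn^{1/c}\bigr)=-\frac{2\pi i h}{c}\int_{0}^{1}(n+u)^{1/c-1}\,e\bigl(-h(n+u)^{1/c}\bigr)\,du,
\]
which extracts the same factor $(h/c)(n+u)^{1/c-1}$ cancelling the weight $cn^{1-1/c}$ and the $1/h$, but with \emph{no remainder}, and then applies Lemma~4.1 to $\sum_{n\sim N}e\bigl(n\theta-h(n+u)^{1/c}\bigr)$ uniformly in $u\in[0,1]$. Two smaller corrections to your bookkeeping: once the weight has been cancelled the van der Corput bound for the unweighted sum is $|h|^{1/2}N^{1/(2c)}+|h|^{-1/2}N^{1-1/(2c)}$ (your stated shape is $N^{1-1/c}$ times too large), and the paper balances with $H=N^{1-1/c}$ rather than $N^{2/3-1/(3c)}$.
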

\begin{proof}
By dyadic splitting, we only need to prove that
\begin{equation}\label{cn11centhetansN}
\sum_{\substack{n\sim N\\ n\in\N^c}}cn^{1-\frac1c}\cdot e(n\theta)=\sum_{\substack{n\sim N}}e(n\theta)+O\left(N^{\frac{3}{2}-\frac1c}\log N\right).
\end{equation}
It is easy to verify that $n\in\N^c$ if and only if
$$
\lfloor -n^{\frac1c} \rfloor-\lfloor-(n+1)^{\frac1c}\rfloor=1.
$$
Clearly
$$
\lfloor-n^{\frac1c} \rfloor=-n^{\frac1c}-\psi(-n^{\frac1c})-\frac12.
$$
Hence
\begin{align*}
&\sum_{\substack{n\sim N\\ n\in\N^c}}cn^{1-\frac1c}\cdot e(n\theta)=
\sum_{n\sim N} e(n\theta)\cdot cn^{1-\frac1c}\left(\lfloor-n^{\frac1c}\rfloor-\lfloor-(n+1)^{\frac1c}\rfloor\right)\\
=&\sum_{n\sim N} e(n\theta)\cdot cn^{1-\frac1c}
\left(\psi(-(n+1)^{\frac1c})-\psi(-n^{\frac1c})\right)+
\sum_{n\sim N} e(n\theta)\cdot cn^{1-\frac1c}\left((n+1)^{\frac1c}-n^{\frac1c}\right)\\
=&\sum_{n\sim N} e(n\theta)\cdot cn^{1-\frac1c}
\left(\psi(-(n+1)^{\frac1c})-\psi(-n^{\frac1c})\right)+
\sum_{n\sim N}e(n\theta)+O(1).
\end{align*}
According to Lemma \ref{t[t]12},
\begin{align}
&\sum_{n\sim N} e(n\theta)\cdot cn^{1-\frac1c}
\left(\psi(-(n+1)^{\frac1c})-\psi(-n^{\frac1c})\right) \label{SumExpDoublePsi}\\
=&-\frac{1}{2 \pi i}\sum_{0 < |h|\leq H}\frac{1}{h}\sum_{n\sim N} e(n\theta)\cdot cn^{1-\frac1c}\left(e(-h(n+1)^{\frac1c})-e(-hn^{\frac1c})\right) \notag\\
&+ O\bigg(N^{1-\frac1c}\cdot\sum_{N < n \leq  2N+1}\min\bigg\{1,\frac{1}{H\|n^{\frac1c}\|}\bigg\}\bigg)\notag\\
=&-\frac{1}{2 \pi i}\sum_{0 < |h|\leq H}\frac{1}{h}\sum_{n\sim N} e(n\theta)\cdot cn^{1-\frac1c}\left(e(-h(n+1)^{\frac1c})-e(-hn^{\frac1c})\right) \notag\\
&+ O\bigg(N^{1-\frac1c}\cdot\sum_{n \sim N}\min\bigg\{1,\frac{1}{H\|n^{\frac1c}\|}\bigg\}\bigg) + O(N^{1-\frac1c}).\notag
\end{align}

Clearly
$$
e(-h(n+1)^{\frac1c})-e(-hn^{\frac1c})=-\frac{2\pi ih}{c}\int_{\T}(n+u)^{\frac1c-1}e(-h(n+u)^{\frac1c})d u.
$$
Hence the main term on the right hand side of (\ref{SumExpDoublePsi}) is 
\begin{equation}\label{UpperDoulePsi}
 \ll \sup_{u \in \T}\left|\sum_{0<|h|\leq H}\sum_{n \sim N}n^{1-1/c}\frac{e(n \theta - h(n+u)^{1/c})}{(n+u)^{1-1/c}}\right|.  
\end{equation}

For $u\in \T$, we have
$$
\frac{d^2}{dx^2}(x\theta\pm h(x+u)^{\frac1c})=\mp\frac{c-1}{c^2}\cdot\frac{h}{(x+u)^{2-\frac1c}}.
$$
Hence Lemma \ref{vanderCorput} implies
\begin{align*}
\sum_{n \sim N} e\left(n\theta-h(n+u)^\frac1c \right)\ll N\cdot\frac{h^{\frac12}}{N^{1-\frac1{2c}}}+\frac{N^{1-\frac1{2c}}}{h^{\frac12}}.
\end{align*}
It follows that
\begin{align*}
\sum_{n\sim N} \frac{n^{1-\frac1c}}{(n+u)^{1-\frac1c}}\cdot e\left(n\theta-h(n+u)^\frac1c \right)\ll N^{\frac1{2c}}h^{\frac12}+N^{1-\frac1{2c}}h^{-\frac12}+1.
\end{align*}
Hence (\ref{UpperDoulePsi}) is
$$
\ll N^{\frac1{2c}}H^{\frac32}+N^{1-\frac1{2c}}H^{\frac12}+H.
$$

Let us turn to the error term in (\ref{SumExpDoublePsi}). Let $b_h$ be as in Lemma \ref{t[t]12}. By Lemmas \ref{vanderCorput} and \ref{t[t]12}, we obtain
\begin{align}\label{N11c1Hn1c}
&N^{1-\frac1{c}}\sum_{n\sim N}\min\bigg\{1,\frac{1}{H\|n^{\frac1c}\|}\bigg\}\notag\\
=& N^{1-\frac1{c}}\sum_{h=-\infty}^\infty b_h\sum_{n\sim N} e(hn^{\frac1c})\ll
N^{1-\frac1{c}}\sum_{h=-\infty}^\infty |b_h|\cdot \left(h^{\frac12}N^{\frac1{2c}}+N^{1-\frac1{2c}}h^{-\frac12}\right)\notag\\
\ll&
N^{1-\frac1{c}}\sum_{|h|<H} \frac{\log H}{H}\cdot \left(h^{\frac12}N^{\frac1{2c}}+N^{1-\frac1{2c}}h^{-\frac12}\right)+
N^{1-\frac1{c}}\sum_{|h|\geq H} \frac{H}{h^2}\cdot \left(h^{\frac12}N^{\frac1{2c}}+N^{1-\frac1{2c}}h^{-\frac12}\right)\notag\\
\ll&N^{1-\frac1{2c}}H^{\frac12}\log H+N^{2-\frac3{2c}}H^{-\frac12}\log H+N^{1-\frac1{2c}}H^{\frac12}+
N^{2-\frac3{2c}}H^{-\frac12}.
\end{align}
Finally, letting $H=N^{1-\frac1c}$, we obtain that
$$
\sum_{0<|h|\leq H}\sum_{n\sim N} e(n\theta)\cdot cn^{1-\frac1c}\left(e(-h(n+1)^{\frac1c})-e(-hn^{\frac1c})\right)\ll N^{\frac32-\frac1c},
$$
and
$$
N^{1-\frac1{c}}\sum_{n\sim N}\min\bigg\{1,\frac{1}{H\|n^{\frac1c}\|}\bigg\}\ll
N^{\frac32-\frac1c}\log N.
$$
Thus (\ref{cn11centhetansN}) follows from (\ref{SumExpDoublePsi}).
\end{proof}

The following lemmas are motivated by Bourgain's strategy \cite{B89}, which can give $L_q$ norm estimates when $q>0$ is not an integer.

\begin{lemma}\label{Rdelta}
 Let $N$ be a sufficiently large integer and $f:[N] \to \mathbb{C}$ be a function. We define
 $$
\cR_\delta:=\{\theta\in \T:\,|\widehat{f}(\theta)|>\delta N\}.
$$
Let $v>0$. Assume that, for any $\epsilon > 0$ and every $\delta \in (0,1)$ we have
\begin{equation}\label{Rdeltae}
\mes(\cR_\delta)\ll_{\epsilon}\frac{1}{\delta^{v+\epsilon}N}.
\end{equation}
Then for any $u > v$
$$
\int_{\T}|\widehat{f}(\theta)|^u d\theta\ll_u N^{u-1}.
$$
\end{lemma}
\begin{proof}
Let $\epsilon_0 \in (0,u-v)$. Then
\begin{align}\label{hatfuRdelta}
\int_{\T}|\widehat{f}(\theta)|^ud\theta\leq&\sum_{j\geq 1}\bigg(\frac{N}{2^{j-1}}\bigg)^u\cdot\mes\bigg(\bigg\{\theta\in \T:\,\frac{N}{2^{j}}<|\widehat{f}(\theta)|\leq
\frac{N}{2^{j-1}}\bigg\}\bigg)\notag\\
\ll_{\epsilon_0}&\sum_{j\geq 1}\bigg(\frac{N}{2^{j-1}}\bigg)^u\cdot\frac{1}{(\frac1{2^j})^{v +\epsilon_0 }N}=2^uN^{u-1}\sum_{j\geq 1}\frac{1}{2^{(u-v-\epsilon_0)j}}\ll N^{u-1}.
\end{align}
\end{proof}

We first use Bourgain's strategy \cite{B89} to prove the following mean-value result for functions whose majorants are ``pseudorandom''.
\begin{lemma}\label{BourInt}
Let $f, \nu:\,[N]\to\C$ functions with $|f|\leq \nu$ and let $K < N$. Let $u_0, u_1>0$ be such that the following assumptions hold:
\begin{enumerate}[(i)]
    \item $\int_{\T}|\widehat{f}(\theta)|^{u_0} d \theta\ll K N^{u_0-1}$.
    \item For any $\theta \in \T$ and $\epsilon > 0$, we have $$\widehat{\nu}(\theta) \ll \frac{N}{1 + N \|\theta\|} + o(NK^{-\frac{2}{u_1 + \epsilon}} ).$$
    \item $u_0 + u_1 \geq 2$.
\end{enumerate}
Then, for any $u > u_0 + u_1$,
$$
\int_{\T}|\widehat{f}(\theta)|^u d\theta\ll_u N^{u-1}.
$$
\end{lemma}
\begin{proof}
Let
$$\cR_\delta:=\{\theta\in \T:\,|\widehat{f}(\theta)|>\delta N\}$$ for any $\delta\in (0,1)$.
According to Lemma \ref{Rdelta}, we only need to show that
\begin{equation}
\mes(\cR_\delta)\ll_{\epsilon}\frac{1}{\delta^{u_0+u_1+\epsilon}N}
\end{equation}
for any $\epsilon>0$ and $\delta\in (0,1)$.

Suppose first that 
$$
\delta < K^{-\frac{1}{u_1 + \epsilon}}.
$$
Then, by the definition of $\cR_{\delta}$ and assumption (i),
\begin{align*}
(\delta N)^{u_0}\cdot\mes(\cR_\delta)\leq\int_{\T}|\widehat{f}(\theta)|^{u_0}d\theta
\ll K N^{u_0 - 1},
\end{align*}
so
$$
\mes(\cR_\delta)\ll \frac{K }{\delta^{u_0} N}\leq
\frac1{\delta^{u_0+u_1+\epsilon} N}.
$$
From now on we can assume that $\delta \geq K^{-\frac{1}{u_1 + \epsilon}}$. There exist $\theta_1,\ldots,\theta_R\in \cR_\delta$ that are $N^{-1}$-spaced such that $\mes(\cR_{\delta}) \ll {R}/{N}$.
Since $\theta_r\in \cR_\delta$, we have $$|\widehat{f}(\theta_r)| > \delta N$$ for $r=1, \dots, R$. It follows that
$$
R^2\delta^2N^2\leq \bigg(\sum_{r=1}^R|\widehat{f}(\theta_r)|\bigg)^2.
$$
Since $|f|\leq \nu$, we can write, for each $n$, $$f(n)=a_n\nu(n)$$ with $|a_n|\leq 1$. Further, for $1\leq r\leq R$, write
$$
|\widehat{f}(\theta_r)|=b_r\widehat{f}(\theta_r)
$$
where $|b_r|=1$. Then by the Cauchy-Schwarz inequality (our argument corresponds to using duality principle), we have 
\begin{align}\label{Dual}
R^2\delta^2N^2\leq&\bigg(\sum_{r=1}^Rb_r\sum_{n\in\Z} a_n\nu(n)e(\theta_rn)\bigg)^2 \notag \\
\leq&\bigg(\sum_{n\in\Z}|a_n|^2\nu(n)\bigg)\cdot
\bigg(\sum_{n\in\Z}\nu(n)\bigg|\sum_{r=1}^Rb_r  e(\theta_rn)\bigg|^2\bigg) \notag\\
\leq&\bigg(\sum_{n\in\Z}\nu(n)\bigg)\cdot
\sum_{1\leq r,r'\leq R}b_r\overline{b_{r'}}\sum_{n\in\Z}\nu(n)  e\big((\theta_r-\theta_{r'})n\big) \notag \\
\leq &2N
\sum_{1\leq r,r'\leq R}\big|\widehat{\nu}(\theta_r-\theta_{r'})\big|. 
\end{align}
Recall $\delta \geq K^{-\frac{1}{u_1 + \epsilon}}$ and let $\theta_{r,r'} = \theta_r - \theta_{r'}$. Hence by assumption (ii),
\begin{align*}
\delta^2 \ll&\frac{1}{R^2}
\sum_{\substack{1\leq r,r'\leq R
}}\frac{1}{1+N\|\theta_{r,r'}\|} \ll \frac{1}{R} + \frac{1}{R^2}\sum_{\substack{1\leq r,r'\leq R \\ r \neq r'}}\frac{1}{N\|\theta_{r,r'}\|} \\\ll&\frac{1+\log R}{R}\ll R^{-1+ \min\{\epsilon^2, 1/100\}}.
\end{align*}
Hence
$$
R \ll \frac{1}{\delta^{2 + \epsilon}},
$$
which implies that 
$$\mes(\cR_\delta) \ll \frac{1}{\delta^{2 +\epsilon} N} \ll \frac{1}{\delta^{u_0 + u_1 +\epsilon} N}.$$
\end{proof}
Next lemma is an application of Lemmas \ref{PseShap} and \ref{BourInt}.
\begin{lemma}\label{falphaeta}
Let $X,W,N$ be as in Theorem \ref{primecWnbT}. Let $b \in [W]$. For a fixed $c \in (1,2)$ we define 
$\tau_{W,X,b}:[N] \to \R$ through
$$
\tau_{W,X,b}(n):= c(Wn+b)^{1-\frac1c}\1_{Wn+b \in \N^c}.
$$
If $f:\,[N]\to \R$ is a function with $|f|\leq\tau_{W,X,b}$ and $$u>2+\frac{4(c-1)}{2-c},$$
then
\begin{equation}
\int_{\T}|\widehat{f}(\theta)|^ud\theta\ll_u N^{u-1}.
\end{equation}
\end{lemma}
 
\begin{proof}
By Parseval's identity and the definition of $\tau_{W,X,b}$,
$$
\int_{\T}|\widehat{f}(\theta)|^2d\theta\leq \sum_{n}\tau_{W,X,b}(n)^2
\ll (WN)^{2-\frac1c} \ll N \cdot N^{1-\frac{1}{c}}\log^{2-\frac{1}{c}} N.
$$
On the other hand, by orthogonality of additive characters and Lemma \ref{PseShap}, for $\theta\in \T$ we have
\begin{align}
&\widehat{\tau}_{W,X,b}(\theta)= \sum_{\substack{n \leq N \\ Wn+b \in \N^c}}c(Wn+b)^{1-\frac{1}{c}}e(n \theta) \\
&= \sum_{\substack{m \leq X \\ m \equiv b \pmod{W} \\ m \in \N^c} }c m^{1-\frac{1}{c}}e\left(\frac{m-b}{W}\theta\right) + O(X^{1-\frac{1}{c}})\notag\\
&=\frac1W\sum_{k=0}^{W-1}
e\bigg((\theta+k)\cdot\frac{-b}{W}\bigg)\cdot\widehat{\tau}_{1,X,0}\bigg(\frac{\theta+k}{W}\bigg) + O(X^{1-\frac{1}{c}}) \notag\\
&=\frac1W\sum_{k=0}^{W-1}
e\bigg((\theta+k)\cdot\frac{-b}{W}\bigg)\sum_{\substack{m\leq X}}e\bigg(m\cdot\frac{\theta+k}{W}\bigg)+O(X^{\frac{3}{2}-\frac1c}\log X) \notag\\
&=\sum_{\substack{m\leq X\\ m\equiv b\pmod{W}}}e\bigg(\theta\cdot\frac{m-b}{W}\bigg)+O(X^{\frac{3}{2}-\frac1c}\log X) \notag\\
&= \sum_{\substack{n\leq N}}e(\theta\cdot n) + O(X^{\frac{3}{2}-\frac1c }\log X) = \sum_{\substack{n\leq N}}e(\theta\cdot n) + O(N^{\frac{3}{2}-\frac1c }\log^{\frac{5}{2}-\frac{1}{c}} N).\label{EroSharExp}
\end{align}
Thus we can apply Lemma \ref{BourInt}, with $u_0 = 2$, $K = N^{1-\frac{1}{c}}\log^2 N$ and 
$$
u_1 = {4(c-1)}/{(2-c)}.
$$
Note that, for any $\epsilon>0$, 
$$
1 - \frac{2(1-\frac{1}{c})}{u_1 + \epsilon}> 1 - \frac{2(2-c)}{4c} =\frac{3}{2} - \frac1c.
$$
Thus, for any $\epsilon > 0$,
$$
N^{\frac{3}{2}-\frac{1}{c}}\log^{\frac{5}{2}-\frac{1}{c}}N =o(N K^{-\frac{2}{u_1+\epsilon}}),
$$
so the error term in (\ref{EroSharExp}) is $o(NK^{-\frac{2}{u_1 + \epsilon}})$ for $\epsilon>0$. Hence our claim follows by Lemma \ref{BourInt}.
\end{proof}
In the following lemma, we construct a good function that will be used in the proof of Lemma \ref{BourCir}.
\begin{lemma}\label{defsig}
Let $N>R>0$ are sufficiently large integers. Suppose that $\theta_1, \theta_2, \dots, \theta_R$ are $1/N$  spaced. There exists a function $\sigma: \T \to \R$ such that the following four conditions hold
\begin{enumerate}[(i)]
    \item $\sigma(\theta)\in[0,1000]$ for each $\theta\in \T$;
    \item $\supp\widetilde{\sigma}\subseteq[-N,N]$;
    \item $\|\sigma\|_{1} \ll R/N$;
    \item $\sigma(\theta)\geq 1$ if $\|\theta-\theta_r\|\leq 1/N$ for some $1\leq r\leq R$.
\end{enumerate}
\end{lemma}

\begin{proof}
Let $M = \lfloor N/2 \rfloor$. In fact, we can define $\sigma(\theta)$ from ``Fej\'er'' perspective as the following
\begin{align}
\sigma(\theta) & :=\frac{10}{M} \sum_{1 \leq r \leq R} \sum_{|m|\leq M}\left(1- \frac{|m|}{M} \right)e\left(m\left(\theta - \theta_r\right)\right) \label{DefSigma} \\
& = \frac{10}{M}\sum_{1 \leq r \leq R}\frac{\sin^2 (\pi M (\theta - \theta_r))}{M \sin^2 (\pi (\theta - \theta_r))} \label{FejerSin}\\
& \leq \frac{100}{M} \sum_{1 \leq r \leq R} \min\left\{M, \frac{1}{M\|\theta - \theta_r\|^2}\right\} \label{FejerUpp}
\end{align}
By (\ref{FejerSin}) and (\ref{FejerUpp}), we have
$$
0 \leq \sigma(\theta) \leq \frac{100}{M} M\left(1 + \sum_{1 \leq r \leq R}\frac{2}{r^2}\right) \leq 1000,
$$
so (i) follows. (ii) quickly follows from the definition of $\widetilde{\sigma}$ and (\ref{DefSigma}). Since $\sigma \geq 0$, we have 
$$
\|\sigma\|_1 = \int_{\T}\sigma(\theta) d \theta = \frac{10R}{M} \ll \frac{R}{N},
$$
which implies (iii). (iv) follows directly from (\ref{FejerSin}).
\end{proof}

Next lemma is also due to Bourgain \cite{B89} but with more complicated techniques. Here we use Salmensuu's \cite{S20} version and give the necessary details.
\begin{lemma}\label{BourCir}Let $0 < \kappa <1$, $u_0 > 2/\kappa$ and $u_1>0$. Suppose that $f, \nu:\,[N]\to\R$ are functions with $|f|\leq \nu$. Let us define the following Hardy-Littlewood decomposition:
\begin{align*}
&\M_{a,q} = \left\{\theta: \left|\theta - \frac{a}{q}\right| \leq \frac{1}{qQ} \right\} \\
&\M = \bigcup_{1 \leq q \leq P}\bigcup_{\substack{a=0 \\ (a,q)=1 }}^{q-1}\M_{a,q}, \quad \text{and} \quad \m = \T \setminus \M,
\end{align*}
where $P$, $Q$, $K$ are some positive numbers with  $P > C + K^{2/(\kappa u_1) + 1}$ for some large constant $C$, $2 P^2<Q<N/100$. 

Assume the following assumptions hold:
\begin{enumerate}[(i)]
\item $\int_{\T}|\widehat{f}(\theta)|^{u_0} d \theta \ll K N^{u_0-1}$.
    \item For any $\theta \in \M_{a,q}$ with $(a,q)=1$ and $1 \leq q \leq P$, we have $$\widehat{\nu}(\theta) \ll \frac{q^{-\kappa}N}{1 + N \|\theta-a/q\|} + o(NK^{-\frac{2}{u_1}} ).$$ \label{MjPriBour}
    \item For any $\theta \in \m$, we have $\widehat{\nu}(\theta) = o(NK^{-\frac{2}{u_1}})$.
\end{enumerate}
Then for any $u > u_0 + u_1$
$$
\int_{\T}|\widehat{f}(\theta)|^u d\theta\ll_u N^{u-1}.
$$
\end{lemma}
 
\begin{proof}
Let 
$$
\cR_{\delta}:=\{\theta \in \T: |\widehat{f}(\theta)|>\delta N\}.
$$
Arguing as in the proof of Lemma \ref{BourInt}, it is enough to prove that
$$
\mes(\cR_\delta)\ll\frac{1}{\delta^{u_0+u_1}N}
$$
for every 
\begin{equation}\label{deltaK}
\delta \geq K^{-\frac{1}{u_1}}.  
\end{equation}
Suppose that $\theta_1,\ldots,\theta_R\in \cR_\delta$  are $N^{-1}$-spaced and write $\theta_{r,r'} = \theta_r - \theta_{r'}$. For any $\gamma > 1$, by (\ref{Dual}) and H\"older's inequality we have 
\begin{equation}\label{RNnuGamma}
2^{-\gamma}R^{2} \delta^{2 \gamma} N^{2 \gamma} \leq R^{2-2 \gamma}\left(N \sum_{1 \leq r,r'\leq R} |\widehat{\nu}(\theta_{r,r'})|\right)^{\gamma} \leq N^{\gamma} \sum_{1 \leq r, r' \leq R}|\widehat{\nu}(\theta_{r,r'})|^{\gamma}.   
\end{equation}
Recalling $\delta \geq K^{-\frac{1}{u_1}}$, we have by assumption (iii)
\begin{equation}\label{minorsumnu}
 \sum_{\substack{1 \leq r, r' \leq R \\ \theta_{r,r'} \in \m}}|\hat{\nu}(\theta_{r,r'})|^{\gamma} = o(R^{2} \delta^{2 \gamma} N^{ \gamma}).
\end{equation}
Let $Q' = C + \delta^{-h}$, where $2/\kappa < h < 2/\kappa + u_1$. Recall (\ref{deltaK}). Note that those $\mathfrak{M}_{a,q}$ are disjoint, so from (\ref{MjPriBour}), we have that
\begin{align}    
& \sum_{Q'<q\leq P}\sum_{\substack{a=1 \\ (a,q)=1}}^{q}\sum_{\substack{1 \leq r,r' \leq R \\ \theta_{r,r'} \in \M_{a,q}}} |\hat{\nu}(\theta_{r,r'})|^{\gamma}\notag\\
& \ll \sum_{1 \leq r,r' \leq R } |\hat{\nu}(\theta_{r,r'})|^{\gamma} \notag\\
&\ll R^2 (Q')^{-\kappa \gamma}N^{\gamma} + o(R^2 \delta^{2 \gamma} N^{\gamma})\notag \\
& = R^2(C+\delta^{-h})^{-\kappa \gamma}N^{\gamma} + o(R^2 \delta^{2 \gamma} N^{\gamma}) \notag\\
& \leq  2\min\{C^{-\kappa \gamma}, \delta^{h \kappa \gamma}\}R^2 N^{\gamma} + o(R^2 \delta^{2 \gamma} N^{\gamma})\notag \\
& \leq 2\min\{C^{-\kappa \gamma}, \delta^{2 \gamma + (h-\frac{2}{\kappa})\kappa \gamma}\}R^2N^{\gamma} + o(R^2 \delta^{2 \gamma} N^{\gamma}). \label{q>Qmajor}
\end{align}
Since $C$ is a sufficiently large constant, (\ref{q>Qmajor}) can be bounded by $3^{-\gamma} R^2 \delta^{2 \gamma} N^{\gamma}$. By (\ref{RNnuGamma}), (\ref{minorsumnu}) and (\ref{q>Qmajor}), we have 
\begin{equation}\label{q<Q'case}
R^{2} \delta^{2 \gamma} N^{ \gamma} \ll \sum_{\substack{1\leq a\leq q\leq Q'\\ (a,q)=1}}
\sum_{\substack{1\leq r,r'\leq R\\
\theta_{r,r'}\in\M_{a,q}
}}|\hat{\nu}(\theta_{r,r'})|^{\gamma} \ll_\kappa \sum_{\substack{1\leq a\leq q\leq Q'\\ (a,q)=1}}
\sum_{\substack{1\leq r,r'\leq R\\
\theta_{r,r'}\in\M_{a,q}
}}\frac{N^\gamma q^{-\kappa \gamma }}{(1+N\|\theta_{r,r'}-\frac aq\|)^\gamma}.    
\end{equation}
Let
$$
F(\theta)=\frac{1}{(1+N\|\theta\|)^\gamma}
$$
and
$$
G(\theta):=\sum_{1\leq a\leq q\leq Q'}
q^{- \kappa \gamma}\cdot F\bigg(\theta-\frac aq\bigg).
$$
By (\ref{q<Q'case}), 
\begin{equation}\label{delta2gammaR2Gtheta}
\delta^{2\gamma}R^2\ll
\sum_{1\leq r,r'\leq R}G(\theta_{r,r'}).
\end{equation}
If $\|\theta-\theta'\|\leq N^{-1}$, then
$$
F(\theta)\asymp F(\theta')\quad \text{and} \quad G(\theta)\asymp G(\theta').
$$
Let $\sigma: \T \to \R$ be a function fulfilling four conditions in Lemma \ref{defsig}. 
Let 
$$\kappa(\theta)=\begin{cases}
1,&\text{if }\theta\in(-\frac1{10N},\frac1{10N}),\\
0,&\text{others}.
\end{cases}
$$
Clearly
$$
\sigma(\theta) \geq \sum_{r=1}^R\kappa(\theta-\theta_r).
$$
Let
$$
\sigma_1(\theta):=\sigma(-\theta).
$$
Then
\begin{align*}
(\sigma*\sigma_1)(\theta)=&\int_{\T}\sigma(t)\sigma(t-\theta)dt\\
\geq &\int_{\T}\bigg(\sum_{r=1}^R\kappa(t-\theta_r)\bigg)\cdot
\bigg(\sum_{r=1}^R\kappa(t-\theta-\theta_r)\bigg)d t\\
=&\sum_{1\leq r,r'\leq R}\int_{\T}\kappa(t-\theta_r)\kappa(t-\theta-\theta_{r'})d t\\
\gg &\frac{1}{N}\sum_{1\leq r,r'\leq R}\kappa(\theta-\theta_{r,r'}).
\end{align*}
Therefore,
\begin{align*}
\langle G,\sigma*\sigma_1\rangle=&\int_{\T} G(\theta)\cdot(\sigma*\sigma_1)(\theta) d\theta\\
\gg&\frac1N\sum_{1\leq r,r'\leq R}\int_{\T} G(\theta)\cdot\kappa(\theta-\theta_{r,r'})d\theta.
\end{align*}
Recall that $G(\theta)\asymp G(\theta_{r,r'})$ if $\kappa(\theta-\theta_{r,r'})=1$.
Hence, by (\ref{delta2gammaR2Gtheta}),
\begin{equation}\label{Gsigmasigma1a}
\langle G,\sigma*\sigma_1\rangle\gg
\frac1N\sum_{1\leq r,r'\leq R}G(\theta_{r,r'})
\int_{\T} \kappa(\theta-\theta_{r,r'})d\theta\gg
\frac{\delta^{2\gamma}R^2}{N^2}.
\end{equation}

On the other hand, we have
\begin{align*}
\widetilde{G}(k)=&\int_{\T} G(\theta) e(k\theta)d\theta
=
\sum_{q\leq Q'}\sum_{a=0}^{q-1}q^{- \kappa \gamma}\int_{\T}
F\bigg(\theta-\frac aq\bigg)e(k\theta)d\theta\\
=&
\sum_{q\leq Q'}q^{-\kappa \gamma}\int_{\T}
F(\theta)e(k\theta)\bigg(\sum_{a=0}^{q-1}e\bigg(\frac{ka}{q}\bigg)\bigg)d\theta=\widetilde{F}(k)\sum_{\substack{q\leq Q'\\ q\mid k}}q^{1-\kappa \gamma}.
\end{align*}
Assume that $\kappa \gamma >1$ and write
$$
d(k;Q'):=|\{q\leq Q':\,q\mid k\}|.
$$
Hence
$$
|\widetilde{G}(k)|\ll |\widetilde{F}(k)|\cdot d(k;Q')\leq \|F\|_{1}\cdot d(k;Q')\ll\frac{d(k;Q')}{N}.
$$
 
Recall that $\widetilde{\sigma}$ is supported on $[-N,N]$.  Let $0< \tau< 1 $ chosen later. By Parseval's identity
\begin{align*}
& \langle G,\sigma*\sigma_1\rangle=\sum_{|k|\leq N}\widetilde{G}(k)\overline{(\widetilde{\sigma*\sigma_1)}(k)} \leq \sum_{|k|\leq N}|\widetilde{G}(k)|\cdot|\hat{\sigma}(k)|^2
\\ 
\ll & \frac1N\sum_{|k|\leq N}|\widetilde{\sigma}(k)|^2\cdot d(k;Q') \leq \frac1N\bigg(Q'^\tau\sum_{\substack{|k|\leq N\\ d(k;Q')\leq Q'^\tau}}|\widetilde{\sigma}(k)|^2+|\widetilde{\sigma}(0)|^2\sum_{\substack{|k|\leq N\\ d(k;Q')\geq Q'^\tau}}d(k;Q')\bigg).
\end{align*}
Since $\sigma$ is bounded and $|\widetilde{\sigma}(0)|=\|\sigma\|_{1}\ll R/N$, we have by Parseval's identity
$$
\sum_{\substack{|k|\leq N\\ d(k;Q')\leq Q'^\tau}}|\widetilde{\sigma}(k)|^2\leq
\sum_{\substack{|k|\leq N}}|\widetilde{\sigma}(k)|^2
=\int_{\T}\sigma(\theta)^2d \theta\leq
\|\sigma\|_{\infty}\int_{\T}|\sigma(\theta)|d \theta\ll \frac{R}{N}.
$$
According to \cite[Lemma 4.28]{B89}, for any $B > \tau$, we have
$$
\sum_{\substack{|k|\leq N\\ d(k;Q')\geq Q'^{\tau}}}d(k;Q)\leq
Q'\sum_{\substack{|k|\leq N\\ d(k;Q')\geq Q'^{\tau}}}1\leq C_{\tau,B}Q'^{1-B}N.
$$
Consequently, 
\begin{equation}\label{Gsigmasigma1b}
\frac{\delta^{2\gamma}R^2}{N^2} \ll \langle G,\sigma*\sigma_1\rangle\ll \frac{Q'^{\tau} R}{N^2}+
\frac{Q'^{1-B} R^2}{N^{2}}.
\end{equation}
If we choose $B$ sufficiently large depending on $\gamma$, then the second term of the right hand side of (\ref{Gsigmasigma1b}) is negligible. 
Let $\gamma \in (1/\kappa, u_0/2) $ and $\tau \leq u_1/h$. We obtain that
$$
R\ll \delta^{-2\gamma}Q'^{\tau}\ll \delta^{-2\gamma}(C+\delta^{-h})^{\tau} \ll \delta^{-2 \gamma }\delta^{-\tau h} \ll \delta^{-u_0 -u_1}.
$$
\end{proof}
Now we can prove Theorem \ref{primecWnbT}.
\begin{proof}[Proof of Theorem \ref{primecWnbT}] We shall apply Lemma \ref{BourCir} with $f= f_{W,X,b}$, $\nu(n) = \nu_{W,X,b}(n)$, $u_0$, $u_1$ fulfilling that $u> u_0 > 2+{4(c-1)}/{(2-c)}$ and $0 < u_1 < u- u_0$, and $K=\log^{u_0} N$. By Lemma \ref{falphaeta}, we have
\begin{equation}\label{BourCondiOne}
\int_{\T}|\widehat{f}(\theta)|^{u_0}d\theta \ll 
\bigg(\frac{\phi(W)}{W}\cdot 2\log N\bigg)^{u_0} N^{u_0 - 1} \ll (\log N)^{u_0} N^{u_0 - 1}= KN^{u_0-1},
\end{equation}
so Lemma \ref{BourCir} (i) holds. Lemma \ref{BourCir} (ii) and (iii) follow from Theorem \ref{primecWnbT} (i) and (ii). Consequently, Theorem \ref{primecWnbT} follows from Lemma \ref{BourCir}.
\end{proof}

\section{Lower bound sieve}\label{LBsieve}
\setcounter{lemma}{0} \setcounter{theorem}{0}
\setcounter{equation}{0}

In this section we will prove Theorem \ref{LUthm} (iii) holds for $c \in (1, 41/35)$ and $\alpha^{-} = 0.7816\dots$. By dyadic splitting we will utilize the well-known type I and II information for Piateski-Shapiro primes to construct a lower bound sieve for $\rho(n)$ by Harman's sieve method. However we need the type I and II information for arithmetic progressions, see  Theorem \ref{primecWnbT} (\ref{LowSieveInAps}), so we need to show arithmetic progression version of type I and II information. 

Corresponding to the notation in many papers related to Piatetski-Shapiro primes, we write $\gamma = \frac{1}{c}$ in the remaining sections.

\begin{lemma}[Type I sum for arithmetic progressions]\label{VanExpTypeI}
Let $A \geq 2$. Suppose that $\epsilon$ and $\delta$ are small positive numbers such that $\epsilon > 100 \delta$. Let $16/19 + \epsilon \leq \gamma <1, MN \asymp X, H \leq X^{1 - \gamma + 4 \delta}$. Assume further that $a(m),c(h)$ are complex numbers of modulus $\leq 1$, $d \leq \log^A X$, $(l,d)=1$ and $M$ satisfies the condition
\begin{equation}
    M < X^{3 \gamma - 2 - \epsilon}.
\end{equation}
Then 
\begin{equation}\label{LowTypeIap}
\sum_{h \sim H}\sum_{m \sim M}\sum_{\substack{n \sim N \\ mn \equiv l \pmod{d}}}c(h)a(m)e(h(mn)^{\gamma}) \ll X^{1 - 5\delta}.   
\end{equation}

\end{lemma}
\begin{proof}
For $k \in [d]$ with $(k,d)=1$, let $a'(m,k) =a(m) \1_{m \equiv k \pmod{d}}$ and let $j_k \in [d]$ be such that $j_k k \equiv l \pmod{d}$. By splitting $m$ in (\ref{LowTypeIap}) into $\phi(d)$ cases and writting $n = j_k+rd$, we have 
\begin{align*}
&\sum_{h \sim H}\sum_{m \sim M}\sum_{\substack{n \sim N \\ mn \equiv l \pmod{d}}}c(h)a(m)e(h(mn)^{\gamma})\\
= & \sum_{\substack{1 \leq k \leq d \\ (k,d)=1}}\sum_{h \sim H}\sum_{m \sim M}\sum_{r \sim \frac{N}{d}}c(h)a'(m,k)e(h(m(j_k+rd))^{\gamma}) + O(HM).
\end{align*}
The rest of the proof is same as the proof of \cite[Lemma 4]{K99}.
\end{proof}

\begin{lemma}[Type II sum for arithmetic progressions]\label{VanExpTypeII}     
Let $A \geq 2$. Suppose that $\epsilon$ and $\delta$ are small positive numbers such that $\epsilon > 100 \delta$. Let $16/19 + \epsilon \leq \gamma <1, MN \asymp X, H \leq X^{1 - \gamma + 4 \delta}$. Assume further that $a(m), b(n),c(h)$ are complex numbers of modulus $\leq 1$, $d \leq \log^A X$, $(l,d)=1$ and $M$ satisfies the condition
$$
     X^{1- \gamma + \epsilon} < M < X^{5 \gamma - 4 - \epsilon} \quad \text{ or } \quad   X^{5-5 \gamma + \epsilon} < M < X^{ \gamma - \epsilon}
$$
$$
   \text{or} \quad X^{3-3\gamma + \epsilon} < M < X^{3 \gamma - 2 - \epsilon}.
$$
Then 
\begin{equation}\label{LowTypeIIap}
\sum_{h \sim H}\sum_{m \sim M}\sum_{\substack{n \sim N \\ mn \equiv l \pmod{d}}}c(h)a(m)b(n)e(h(hmn)^{\gamma}) \ll X^{1 - 5\delta}.   
\end{equation}

\end{lemma}

\begin{proof}
    The left hand side of (\ref{LowTypeIIap}) equals 
    $$
    \frac{1}{\phi(d)}\sum_{\chi}\overline{\chi}(l)\sum_{h \sim H}\sum_{m \sim M}\sum_{n \sim N }c(h)a(m)\chi(m)b(n)\chi(n)e(h(hmn)^{\gamma}).
    $$
    We apply \cite[Lemmas 1 and 3]{K99} with $a(m)\chi(m)$ in place of $a(m)$ and $b(n)\chi(n)$ in place of $b(n)$. 
\end{proof}
Let $X$ be a sufficiently large positive number. Fox fixed $d \leq \log^A X$ with any $A>0$ and $(l,d)=1$, we define
$$
\cA = \{n \sim X: n \in \N^c, n \equiv l \pmod{d} \},
$$
and 
$$
\cB = \{n \sim X: n \equiv l \pmod{d}\}.
$$
By using the above two lemmas, we can get our type I and II information which will be used in Harman's sieve method.

\begin{lemma}\label{TypeIandIIPre}
Let $\epsilon>0$ be small, $16/19 + \epsilon \leq \gamma <1$, $0<\delta<\epsilon/{100}$ and $a(m),b(m) \ll d^{O(1)}(m)$. For 
$$
 M < X^{3 \gamma - 2 - \epsilon},
$$
we have
\begin{equation}\label{PreLowSieveTypeI}
\sum_{\substack{ mn \in \cA \\ m \sim M }}\frac{1}{\gamma}a(m) (mn)^{1-\gamma} = \sum_{\substack{ mn \in \cB \\m \sim M }}a(m) + O(X^{1 - 3 \delta}).    
\end{equation}

Moreover if 
$$
     X^{1- \gamma + \epsilon} < M < X^{5 \gamma - 4 - \epsilon} \quad \text{ or } \quad   X^{5-5 \gamma + \epsilon} < M < X^{ \gamma - \epsilon}
$$
$$
   \text{or} \quad X^{3-3\gamma + \epsilon} < M < X^{3 \gamma - 2 - \epsilon},
$$
then
$$
\sum_{\substack{mn \in \cA \\m \sim M }}\frac{1}{\gamma}a(m)b(n) (mn)^{1-\gamma}= \sum_{\substack{ mn \in \cB \\ m \sim M}}a(m)b(n) + O(X^{1- 3 \delta}).
$$
\end{lemma}
\begin{proof}
Let us consider type II sum. We have
$$
 \sum_{\substack{m \sim M \\ mn \in \cA}}\frac{1}{\gamma}a(m)b(n) (mn)^{1-\gamma} = \sum_{\substack{m \sim M \\ mn \in \cB}}\frac{1}{\gamma}a(m)b(n) (mn)^{1-\gamma}( \lfloor- (mn)^{\gamma}\rfloor - \lfloor- (mn+1)^{\gamma}\rfloor) = \Sigma_1 + \Sigma_2,
$$
where
$$
\Sigma_1 = \sum_{\substack{m \sim M \\ mn \in \cB}}\frac{1}{\gamma}a(m)b(n) (mn)^{1-\gamma}((mn+1)^{\gamma} - (mn)^{\gamma})
$$ 
and
$$
\Sigma_2 = \sum_{\substack{m \sim M \\ mn \in \cB}}\frac{1}{\gamma}a(m)b(n) (mn)^{1-\gamma}(-\{-(mn)^{\gamma}\} + \{-(mn+1)^{\gamma}\} ).
$$
Note that
$$
\Sigma_1 = \sum_{\substack{m \sim M \\ mn \in \cB}}a(m)b(n) + O(X^{\delta}).
$$
Next we apply well-known reduction argument using the Fourier expansion of the function $\psi(t)$, namely Vaaler's theorem (see  e.g. \cite[Theorem A.6]{GK91}). According to it, for any $J \geq 1$, there exist functions $\psi_1$ and $\psi_2$ such that
$$
\psi(t) = \psi_1(t) + O(\psi_2(t)),
$$
where 
$$
\psi_1(t) = \sum_{1 \leq |j| \leq J} \beta_1(j)e(jt) \quad \text{and} \quad \psi_2(t) = \sum_{1 \leq |j| \leq J} \beta_2(j)e(jt),
$$
and $\beta_1(j) \ll j^{-1}, \quad \beta_2(j) \ll J^{-1}$. Moreover, $\psi_2$ is non-negative. Let 
$$
f(k) = \mathbf{1}_{k \equiv l \pmod{d}}\frac{1}{\gamma}k^{1-\gamma}\sum_{\substack{  mn = k \\ m\sim M}}a(m)b(n),
$$
and note that $f(k) \leq  X^{1-\gamma + \delta}$.
Consequently,
\begin{align*}
\Sigma_2 & =  \sum_{k \in \cB}f(k)( \psi_1(-(k+1)^{\gamma}) -\psi_1(-k^{\gamma}) ) + O(\sum_{k \in \cB}|f(k)|(\psi_2(-k^{\gamma}) + \psi_2(-(k+1)^{\gamma})))\\
& =: \Sigma_3 + O(\Sigma_4),
\end{align*}
say. We take $J = X^{1-\gamma + 4 \delta}$. Let us first consider $\Sigma_4$. By Lemma \ref{vanderCorput}, we have 
\begin{align*}
\sum_{k \sim X}\psi_2(-k^{\gamma}) & \ll \frac{1}{J} \sum_{1 \leq |j| \leq J} \left| \sum_{k \sim X}e(jk^{\gamma}) \right|  \ll J^{-1}\sum_{1 \leq |j| \leq J} \left(X(jX^{\gamma -2})^{\frac{1}{2}} + (jX^{\gamma -2})^{-\frac{1}{2}} \right)\\
 \ll & J^{\frac{1}{2}}X^{\frac{\gamma}{2}} + J^{-\frac{1}{2}}X^{1-\frac{\gamma}{2}} \ll X^{\gamma - 4\delta}.
\end{align*}
The same estimate holds if $k$ is replaced by $k+1$. Hence 
$$
\Sigma_4 \ll X^{1 - 3 \delta}.
$$
To complete the proof, it suffices to show that $\Sigma_3 \ll X^{1 - 3 \delta}$. We write
$$
g_j(t) = 1 - e(j(t^{\gamma} - (t+1)^{\gamma})).
$$
By partial summation and Lemma \ref{VanExpTypeII}, we have
\begin{align*}
& \Sigma_3 = \sum_{1 \leq |j| \leq J}\beta_1(j)\sum_{k \sim X}f(k)(e(-jk^{\gamma}) - e(-j(k+1)^{\gamma}))\\
\ll & \sum_{1 \leq j \leq J}\beta_1(j) \left|\sum_{k \sim X}f(k)g_j(k)e(-jk^{\gamma}) \right|\\
\ll & \sum_{1 \leq j \leq J}j^{-1}\left| g_j(2X)(2X)^{1-\gamma} \sum_{k \sim X}\frac{f(k)}{k^{1 - \gamma}}e(-jk^{\gamma}) \right|\\
+ & \int_{X}^{2X} \sum_{1 \leq j \leq J}j^{-1}\left| \frac{d}{du}(g_{j}(u)u^{1-\gamma}) \sum_{X \leq k \leq u}\frac{f(k)}{k^{1 - \gamma}}e(-jk^{\gamma})\right|du\\
\ll & \max_{X \leq u \leq 2X} \sum_{1 \leq j \leq J}\left|\sum_{X \leq k \leq u}\frac{f(k)}{k^{1 - \gamma}}e(jk^{\gamma})\right|.
\end{align*}
Hence for some $c(j) \in \C$ with with $|c(j)| = 1$.
$$
\Sigma_3 \ll \max_{X \leq u \leq 2X} \sum_{1 \leq j \leq J}c(j)\sum_{\substack{X \leq mn \leq u \\ mn \equiv l \pmod{d} \\ m \sim M}}a(m)b(n)e(j(mn)^{\gamma}).
$$
By using \cite[Lemma 2.2]{Ha07} one can remove the crossing condition $X \leq mn \leq u$ and the claim follows from Lemma \ref{VanExpTypeII}. 
One can prove (\ref{PreLowSieveTypeI}) similarly by using Lemma \ref{VanExpTypeI} instead of Lemma \ref{VanExpTypeII}.
\end{proof}

Recall $\rho(n,z) = \1_{p \mid n \Rightarrow p \geq z}$. We define, for $m \in \N$,
$$
S(\cA_m,z) = \frac{1}{\gamma}\sum_{mn \in \cA}\rho(n,z)(mn)^{1-\gamma} \quad \text{and} \quad S(\cB_m,z) = \sum_{mn \in \cB}\rho(n,z).
$$
\begin{lemma}\label{SieveLemma}
Let $u \geq 1$, $\epsilon>0$ be a small positive number. Let $16/19 + \epsilon \leq \gamma <1$ and $0< \delta < \epsilon/{100}$. 
\begin{enumerate}[(i)]
    \item 
    Let $P_1, \dots, P_u \in [1,X]$ be such that $\prod_{1 \leq k \leq u}P_k \leq X$ and $z \in [x^\epsilon, X^{6 \gamma -5 -2\epsilon}]$. Suppose that for some $M$ satisfying
$$
     X^{1- \gamma + \epsilon} < M < X^{5 \gamma - 4 - \epsilon} \quad \text{ or } \quad   X^{5-5 \gamma + \epsilon} < M < X^{ \gamma - \epsilon}
$$
$$
   \text{or} \quad X^{3-3\gamma + \epsilon} < M < X^{3 \gamma - 2 - \epsilon},
$$
there exists $\cD \subset \{1, \dots, u\}$ such that  
$$
\prod_{k \in \cD}P_k \asymp M.
$$
Then 
\begin{equation}\label{SieveTypeII}
\sum_{\substack{z \leq p_u < \dots < p_1 \leq (2X)^{1/2} \\ p_i \sim P_i}}S(\cA_{p_1 \cdots p_u}, p_u) = \sum_{\substack{z \leq p_u < \dots < p_1 \leq (2X)^{\frac{1}2} \\ p_i \sim P_i}}S(\cB_{p_1 \cdots p_u}, p_u) + O(X^{1 - \delta}).    
\end{equation}
\item If
$$
 M < X^{3 \gamma - 2 - \epsilon},
$$
then, for any $a(m) \ll d^{O(1)}(m)$,

\begin{equation}\label{SieveTypeI/II}
\sum_{m \sim M}a(m)S(\cA_m, z) =  \sum_{m \sim M}a(m)S(\cB_m, z) + O(X^{1 - \delta}).   
\end{equation}
\end{enumerate}
\end{lemma}
\begin{proof}
Apart from removing cross-conditions, (\ref{SieveTypeII}) is immediately from Lemma \ref{TypeIandIIPre}. On the other hand (\ref{SieveTypeI/II}) follows from Lemma \ref{TypeIandIIPre} and Harman's sieve method \cite[Theorem 3.1]{Ha07} with $M < X^{3 \gamma -2 - \epsilon}, X^\theta = X^{3-3 \gamma + \epsilon}$ and $X^\beta = X^{6 \gamma - 5 - 2 \epsilon}$.
For more details, one can refer to \cite[Lemmas 9 and 10]{K99}. Note that in \cite[Lemmas 9 and 10]{K99}, if $\gamma = 38/45 + \epsilon$, then $z = X^{\frac{1}{15}} = X^{6 (\gamma-\epsilon) - 5} $ corresponding our choice of $z$ by ignoring $\epsilon$. The only difference is that one needs to replace type I and II information by our Lemma \ref{TypeIandIIPre}.  
\end{proof}
Our goal now is to construct a lower bound sieve weight by applying the above lemma.
For $\gamma \in (35/41,1)$, let $\gamma \geq {35}/{41} + \epsilon_1$ for some $\epsilon_1>0$, then apply Lemma \ref{SieveLemma} with $\epsilon = \epsilon_1$. We choose $z= X^{\frac{5}{41}}$ and type I range includes
$$
\Omega_1 = [1, X^{\frac{23}{41}}],
$$
and type II range includes
$$
\Omega_2 = [X^{\frac{6}{41}}, X^{\frac{11}{41}}] \cup [X^{\frac{18}{41}}, X^{\frac{23}{41}}] \cup [X^{\frac{30}{41}}, X^{\frac{35}{41}}].
$$

We first apply Buchstab's identity twice and get

\begin{align*}
S(\cA,(2X)^{\frac{1}2}) & = S(\cA,z) - \sum_{z \leq p < (2X)^{\frac{1}2}} S(\cA_p, z) + \sum_{z \leq p_2 <  p_1 < (2X)^{\frac{1}2}}S(\cA_{p_1p_2},p_2)\\
= S_1 - S_2 + S_3.
\end{align*}
For $S_1,S_2$, we use Lemma \ref{SieveLemma}. For $S_3$ we give further decomposition
\begin{align*}
S_3 & = \sum_{\substack{z < p_2 <  p_1 \leq (2X)^{\frac{1}2} \\ \{p_1, p_2, p_1p_2\} \cap \Omega_2 \neq \emptyset}}S(\cA_{p_1p_2},p_2) + \sum_{\substack{z \leq p_2 <  p_1 \leq (2X)^{1/2} \\ \{p_1, p_2, p_1p_2\} \cap \Omega_2 = \emptyset}}S(\cA_{p_1p_2},p_2) = S_4 + S_5.
\end{align*}
For $S_4$ we use Lemma \ref{SieveLemma}. Note that $S(\cA_{p_1p_2},p_2)$ in $S_5$ is zero unless $p_1p_2 \in [X^{\frac{11}{41}}, X^{\frac{18}{41}}] \cup [X^{\frac{23}{41}}, X^{\frac{30}{41}}]$. We decompose
\begin{align*}
S_5 & =\sum_{\substack{z \leq p_2 <  p_1 < X^{\frac{6}{41}}\\ p_1p_2 > X^{\frac{11}{41}}}}S(\cA_{p_1p_2},p_2) + \sum_{\substack{z < p_2 < X^{\frac{6}{41}} \\ X^{\frac{11}{41}} < p_1 <X^{\frac{18}{41}} \\ p_1p_2 < X^{\frac{18}{41}}}}S(\cA_{p_1p_2},p_2)\\
& + \sum_{\substack{z \leq p_2 < X^{\frac{6}{41}} \\ X^{\frac{11}{41}} < p_1 < X^{\frac{18}{41}}\\ p_1p_2 > X^{\frac{23}{41}}}}S(\cA_{p_1p_2},p_2) + \sum_{\substack{X^{\frac{11}{41}} < p_2 < p_1 <X^{\frac{18}{41}} \\ X^{\frac{23}{41}}< p_1p_2 < X^{\frac{30}{41}}}}S(\cA_{p_1p_2},p_2) \\
& = S_6 + S_7 + S_8 + S_9.
\end{align*}

For $S_6$, we apply Buchstab's decomposition twice more. In $S_7$, $p_1 < X^{\frac{13}{41}}$. For the part of $S_7$ with $p_1 p^2_2 \in \Omega_1$, we can apply Buchstab's decomposition once more. For $S_8$, we simply discard it, i.e., use that $S_8 \geq 0$, since Lemma \ref{SieveLemma} cannot be applied, and note that necessarily $p_1 > X^{\frac{17}{41}}$. For $S_9$, we add a restrictive condition $p_1p^2_2 \leq 2X$ and then discard it. 

Hence, we have 
\begin{align*}
S_{6} & = \sum_{\substack{z \leq p_2 <  p_1 < X^{\frac{6}{41}}\\ p_1p_2 > X^{\frac{11}{41}}}}S(\cA_{p_1p_2},z) - \sum_{\substack{z \leq p_3 < p_2 <  p_1 < X^{\frac{6}{41}}\\ p_1p_2 > X^{\frac{11}{41}}}}S(\cA_{p_1p_2p_3},z)\\
& + \sum_{\substack{z \leq p_4 < p_3 < p_2 < p_1 < X^{\frac{6}{41}}\\ p_1p_2 > X^{\frac{11}{41}} \\ p_1p_2p_3p_4\leq  X^{\frac{23}{41}}}}S(\cA_{p_1p_2p_3p_4},p_4) +  \sum_{\substack{z \leq p_4 < p_3 < p_2 <  p_1 < X^{\frac{6}{41}}\\ p_1p_2 > X^{\frac{11}{41}} \\ p_1p_2p_3p_4 > X^{\frac{23}{41}}}}S(\cA_{p_1p_2p_3p_4},p_4)\\
& = S_{10} - S_{11} + S_{12} + S_{13},
\end{align*}
and
\begin{align*}
S_7 & = \sum_{\substack{z \leq p_2 < X^{\frac{6}{41}} \\ X^{\frac{11}{41}} < p_1 <X^{\frac{13}{41}} \\ p_1p^2_2 \in \Omega_1}}S(\cA_{p_1p_2},z) - \sum_{\substack{z \leq p_3 < p_2 < X^{\frac{6}{41}} \\ X^{\frac{11}{41}} < p_1 <X^{\frac{13}{41}} \\ p_1p^2_2 \in \Omega_1}}S(\cA_{p_1p_2p_3},p_3)\\
& + \sum_{\substack{z < p_2 < X^{\frac{6}{41}} \\ X^{\frac{11}{41}} < p_1 <X^{\frac{13}{41}} \\ p_1p_2 < X^{\frac{18}{41}} \\ p_1p^2_2 \not\in \Omega_1}}S(\cA_{p_1p_2},p_2)\\
& = S_{14} - S_{15} + S_{16}.
\end{align*}

In case of $S_{12}$ necessarily $p_1p_2p_3p_4 \in \Omega_2$ and in case of $S_{15}$ necessarily $p_1p_2p_3 \in \Omega_2$, so for $S_{10}, S_{11}, S_{12}, S_{14}$ and $S_{15}$, we apply Lemma \ref{SieveLemma}. For $S_{13}$ and $S_{16}$, we simply discard them.

Note that all terms we have discarded give positive contribution to $S(\cA, (2X)^{\frac{1}2})$. Let
$$
S^{-}(\cA) =  S(\cA, (2X)^{\frac12}) - S_8 - S_9 - S_{13} - S_{16},
$$ 
which is a lower bound of $S(\cA, (2X)^{\frac12})$. We do the corresponding decomposition for $S(\cB, (2X)^{\frac12})$ and we get that
$$
S^{-}(\cA) =  S(\cB, (2X)^{\frac12}) - T_8 - T_9 - T_{13} - T_{16} + O(X^{1 - \delta}), 
$$
where $T_8,T_9,T_{13}$ and $T_{16}$ are almost the same as the discarded sums with the only difference that $\cA$ is replaced by $\cB$, for example,
$$
T_{13} = \sum_{\substack{z \leq p_4 < p_3 < p_2 <  p_1 < X^{\frac{6}{41}}\\ p_1p_2 > X^{\frac{11}{41}} \\ p_1p_2p_3p_4 > X^{\frac{23}{41}}}}S(\cB_{p_1p_2p_3p_4},p_4).
$$
The remaining task is to calculate the contributions of $T_8, T_9, T_{13}$ and $T_{16}$. The calculation is standard and we refer interested readers to \cite{Ha07} or \cite{K99}. Here we just calculate $T_{13}$ as an example.
Let us first define the corresponding region $\cD_{13}$ to be
$$
\left\{(\theta_1, \theta_2,\theta_3,\theta_4): \frac{5}{41} < \theta_4 \leq \theta_3 \leq \theta_2 \leq \theta_1 \leq \frac{6}{41}, \theta_1 + \theta_2 > \frac{11}{41}, \theta_1 + \theta_2 +\theta_3 + \theta_4  > \frac{23}{41} \right\}.
$$

By the Siegel-Walfisz theorem, for any sufficiently large positive real number $A$, 
$$
T_{13} = I_{13} S(\cB, (2X)^{1/2}) + O(X/(\log X)^A),
$$
where
$$
I_{13} = \iiiint_{\cD_{13}}\omega\left(\frac{1-\theta_1 - \theta_2 - \theta_3 - \theta_4}{\theta_4}\right) \frac{1}{ \theta_1\theta_2\theta_3 \theta^2_4}d\theta_1 d\theta_2 d\theta_3 d\theta_4,
$$
and $\omega(x)$ is Buchstab function, see \cite[Chapter 1]{Ha07}.
With the help of Mathematica program, we have
$I_{13} < 0.0001$. Arguing similarly with $T_8, T_9$ and $T_{16}$, we obtain
$$
S(\cA,(2X)^{\frac12}) = (1- I_8-I_9-I_{13}-I_{16} )S(\cB,(2x)^{\frac12}) + O(X(\log X)^{-A}),
$$
where $I_8 < 0.0206, I_9 < 0.1675$ and $I_{16} < 0.0302$.
Thus
$$
\sum_{\substack{n \sim X \\ n \in \cP \cap \N^c \\ n \equiv l \pmod{d}}}c n^{1-\frac{1}{c}} \rho(n) = S(\cA,(2X)^{\frac12}) > 0.7816 \cdot S(\cB,(2x)^{\frac12}).
$$
Consequently, Theorem \ref{LUthm} (iii) holds for $c \in (1, \frac{41}{35})$ and $\alpha^{-} =  0.7816$.

\begin{Rem}\label{rem3}
We believe one can only slightly improve our the upper bound $41/35$ without new ideas. The reason is that when $c$ becomes larger the lower bound sieve coefficient $\alpha^{-}$ becomes smaller and the upper bound sieve coefficient $\alpha^{+}$ becomes larger. Thus it is hard to satisfy the condition $3 \alpha^{-} - \alpha^{+} >0$ when $c$ is large.
\end{Rem}

\section{Upper bound Sieve}\label{UBsieve}
\setcounter{lemma}{0} \setcounter{theorem}{0}
\setcounter{equation}{0}
In this section, we will construct $\rho^{+}(n)$ in Theorem \ref{LUthm}. We will first prove that Theorem \ref{LUthm} (iv) holds in the end of this section, and will prove that Theorem \ref{LUthm} (i) and (ii) hold in Section \ref{UBFourier}.

For constructing a suitable upper bound sieve, we need type I and II information for Fourier transform of $\rho^{+}(n)$, namely $\rho^{+}(n)e(n \theta)$ for $\theta \in \T$, see condition (\ref{PsedoRandom}) in Theorem \ref{LUthm}. However, in this case, one can only obtain a relative narrow type I and II ranges compared to the lower bound sieve case. The reason, roughly speaking, is that for $\theta = 0$, one can use Fouvry-Iwaniec method to enlarge type I and II ranges, (see \cite{K97}), but for general $\theta$ this method is invalid. In this section, we will use type I and II information for $\rho ^{+}(n)e(n \theta)$ due to Balog and Friedlander work \cite{BF92}.

Let $X$ be a sufficiently large positive number.
We define
$$
\cA = \{n \sim X: n \in \N^c \},
\quad
\text{and} 
\quad
\cB = \{n \in \Z: n \sim X \}.
$$
It suffices to first consider Type I and II without conditions like $n \equiv b \pmod{q}$, since it will be easy to add such conditions using the orthogonality of additive characters; see Lemma \ref{ApUpp}.
\begin{lemma}[Type I]\label{upperTypeI}
Suppose that $5/6 + \epsilon \leq \gamma < 1, \theta \in \T, MN \asymp X,H \leq X^{1-\gamma+ \epsilon + \delta}$, and assume further that $a(m), c(h)$ are complex numbers of modulus $\leq 1$, and
$$
M < X^{ 4 \gamma -3 - \epsilon}.
$$
Then 
$$
\sum_{h \sim H}\sum_{m \sim M}\sum_{\substack{n \sim N \\ mn \in \cB }}c(h)a(m)e(\theta(mn) + h(mn)^{\gamma}) \ll X^{1 - \epsilon -\delta}.
$$
Furthermore, we have
$$
\frac{1}{\gamma}\sum_{\substack{m \sim M \\ mn \in \cA}}a(m)(mn)^{1-\gamma}e(\theta mn) = \sum_{\substack{m \sim M \\ mn \in \cB}}a(m)e(\theta mn) + O(X^{1 -  \epsilon}).
$$
\end{lemma}
\begin{proof}
See proof of \cite[Proposition 3]{BF92}.
\end{proof}

\begin{lemma}[Type II]\label{upperTypeII}
Suppose that $5/6 + \epsilon \leq \gamma < 1, \theta \in \T, MN \asymp X,H \leq X^{1-\gamma+ \epsilon + \delta}$, and assume further that $a(m), b(n), c(h)$ are complex numbers of modulus $\leq 1$,  and either 
$$
     X^{1- \gamma + \epsilon} < M < X^{5 \gamma - 4 - \epsilon} \quad \text{ or } \quad   X^{5-5 \gamma + \epsilon} < M < X^{ \gamma - \epsilon}.
$$
Then 
$$
\sum_{h \sim H}\sum_{m \sim M}\sum_{\substack{n \sim N \\ mn \in \cB}}c(h)a(m)b(n)e(\theta(mn) + h(mn)^{\gamma}) \ll X^{1 - \epsilon -\delta}.
$$
Furthermore, we have 
$$
\frac{1}{\gamma}\sum_{\substack{m \sim M \\ mn \in \cA}}a(m)b(n)(mn)^{1-\gamma}e(\theta mn) = \sum_{\substack{m \sim M \\ mn \in \cB}}a(m)b(n)e(\theta mn) + O(X^{1 - \epsilon}).
$$
\end{lemma}
\begin{proof}
See proof of \cite[Proposition 2]{BF92}.
\end{proof} 

For any $\theta \in \T$, we define
$$
S_{\theta}(\cA_m,z) = \frac{1}{\gamma}\sum_{mn \in \cA}\rho(n,z)(mn)^{1-\gamma}e(mn \theta)
\quad
\text{and}
\quad
S_{\theta}(\cB_m,z) = \sum_{mn \in \cB}\rho(n,z)e(mn \theta).
$$
If $\theta = 0$, we define
$$
S(\cA_m,z) = \frac{1}{\gamma}\sum_{mn \in \cA}\rho(n,z)(mn)^{1-\gamma}
\quad
\text{and}
\quad
S(\cB_m,z) = \sum_{mn \in \cB}\rho(n,z).
$$

\begin{lemma}\label{SieveLemmaExp}
Let $\theta \in \T$,  $u \geq 1$, $\epsilon>0$ be small and let $X>0$ be large and $z \in [x^\epsilon, X^{6 \gamma -5 -2\epsilon}]$.
\begin{enumerate}[(i)]
    \item Let
$P_1, \dots, P_u \in [1,X]$ be such that $\prod_{1 \leq k \leq u}P_k \leq X$ Suppose that for some $M$ satisfying
$$
     X^{1- \gamma + \epsilon} < M < X^{5 \gamma - 4 - \epsilon} \quad \text{ or } \quad   X^{5-5 \gamma + \epsilon} < M < X^{ \gamma - \epsilon},
$$
there exists $\cD \subset \{1, \dots, u\}$ such that  
$$
\prod_{k \in \cD}P_k \asymp M.
$$
Then 
$$
\sum_{\substack{z \leq p_u < \dots < p_1 < (2X)^{\frac12} \\ p_i \sim P_i}}S_{\theta}(\cA_{p_1,\cdots,p_u}, p_u) = \sum_{\substack{z \leq p_u < \dots < p_1 < (2X)^{\frac12} \\ p_i \sim P_i}}S_{\theta}(\cB_{p_1,\cdots,p_u}, p_u) + O(X^{1 - \epsilon}).
$$
\item If
$$
 M < X^{5 \gamma - 4 - \epsilon },
$$
then for $a(m) \ll d^{O(1)}(m)$

$$
\sum_{m \sim M}a(m)S_{\theta}(\cA_m, z) =  \sum_{m \sim M}a(m)S_{\theta}(\cB_m, z) + O(X^{1 - \epsilon}).
$$
\end{enumerate}
\end{lemma}
\begin{proof}
This follows similarly to the proof of Lemma \ref{SieveLemma}.
\end{proof}

Similarly to the construction of the lower bound sieve, for $\gamma \in (35/41,1)$, let $\gamma \geq 35/41 + \epsilon_1$ for some $\epsilon_1>0$. We can pick $z = X^{\frac5{41}}$ and type I range includes
$$
\Omega_1=[1,X^{\frac{17}{41}}]
$$ 
and type II range includes
$$
\Omega_2=[X^{\frac{6}{41}},X^{\frac{11}{41}}] \cup [X^{\frac{30}{41}},X^{\frac{35}{41}}].
$$
For the upper bound sieve, we apply Buchstab's decomposition once to get
\begin{align*}
    S_{\theta}(\cA,(2X)^{1/2}) & = S_{\theta}(\cA,z) - \sum_{z \leq p_1 < X^{\frac{6}{41}}}S_{\theta}(\cA_{p_1},p_1) \\
    & - \sum_{X^{\frac{6}{41}} \leq p_1 \leq X^{\frac{11}{41}} }S_{\theta}(\cA_{p_1},p_1) - \sum_{X^{\frac{11}{41}} < p_1 < (2X)^{\frac{1}{2}}}S_{\theta}(\cA_{p_1},p_1)\\
    & = S_1 - S_2 - S_3 - S_4.
\end{align*}
For $S_1$ and $S_3$ we apply Lemma \ref{SieveLemmaExp}. We simply discard $S_4$ and apply Buchstab's decomposition twice more to $S_2$ obtaining 
\begin{align*}
    S_2 & = \sum_{z <p_1 \leq X^{\frac{6}{41}}}S_{\theta}(\cA_{p_1},z) - \sum_{z \leq p_2 \leq p_1\leq X^{\frac{6}{41}}}S_{\theta}(\cA_{p_1p_2},z) + \sum_{z \leq p_3 < p_2 < p_1\leq X^{\frac{6}{41}}}S_{\theta}(\cA_{p_1p_2p_3},p_3)\\
    & = S_5 - S_6 + S_7.
\end{align*}
For $S_5$ we use Lemma \ref{SieveLemmaExp}. For $S_6$ we can also use Harman's sieve method, but in slightly different way then before. Namely, we write by M\"obius inversion 
\begin{align*}
\sum_{z \leq p_2 \leq p_1\leq X^{\frac{6}{41}}}S_{\theta}(\cA_{p_1p_2},z) & = \frac{1}{\gamma}\sum_{d \mid \cP(z)} \mu(d) \sum_{\substack{z \leq p_2 < p_1\leq X^{\frac{6}{41}} \\ p_1p_2dm \in \cA}}(p_1p_2dm)^{1-\gamma}e(\theta p_1p_2dm).
\end{align*}
\begin{itemize}
    \item If $p_2 d < X^{\frac{6}{41}}$, then $p_1p_2d \in \Omega_1$ and we have a type I sum;
    \item if $X^{\frac{6}{41}} < p_2 d <X^{\frac{11}{41}}$, then $p_2d \in \Omega_2$ and we have a type II sum;
    \item if $p_2 d > X^{\frac{11}{41}}$, we write $d = q_1 q_2 \cdots q_t$ with $z> q_1 > q_2 > \dots > q_t$. Now $p_2 q_1 \cdots q_{j-1} < X^{\frac{11}{41}} < p_2 q_1 \cdots q_{j} $ for some $j \leq t$. Since $q_1 \leq z$, we have $X^{\frac{6}{41}}< p_2 q_1 \cdots q_{j-1} < X^{\frac{11}{41}}$. Thus we obtain $p_2 q_1 \cdots q_{j-1} \in \Omega_2$ and we have a type II sum.
\end{itemize}

Similarly to the lower bound sieve case, we obtain
for any $B \geq 1$ \begin{equation}\label{UppSieveCompare}
S^{+}_{\theta}(\cA) := S(\cA,(2X)^{\frac12}) + S_4 +S_7 = S_{\theta}(\cB, (2X)^{1/2}) + T_4 + T_7 + O(X(\log X)^{-B}).    
\end{equation}

Now we define 
\begin{equation}\label{DefRho+}
\rho^{+}(n) := \rho(n) + \rho_4(n) + \rho_7(n),    
\end{equation}
where $\rho_i(n)$ is corresponding to $T_i$ and $S_i$, for example,
\begin{equation}\label{ExampleRho4}
\rho_4(n) = \sum_{\substack{n = pm \\ X^{\frac{11}{41}} \leq p <(2X)^{\frac{1}{2}} } }\rho(m,p).   
\end{equation}
Now we have finished constructing our upper bound sieve $\rho^{+}(n)$ satisfying a condition  similar to the weak Balog-Friedlander condition (\ref{weakbfcondition}). Namely (\ref{UppSieveCompare}) can be written in the following form
\begin{align}
& \notag \sum_{\substack{n \sim X \\ n \in \N^c} }cn^{1-\frac{1}{c}}\rho^{+}(n) e(n \theta) = S^{+}_{\theta}(\cA) = S^{+}_{\theta}(\cB) +O(X(\log X)^{-B})\\
= & \label{Compare} \sum_{n \sim X }\rho^{+}(n) e(n \theta) + O(X(\log X)^{-B}).
\end{align}

If $\theta = 0$, similarly to the lower bound sieve case, we can obtain the following asymptotic formula
$$
S^{+}(\cA) : = S(\cA,(2X)^{\frac12})+S_4 + S_7 = (1 + I_4 + I_7)S(\cB,(2x)^{\frac12}) + O(X(\log X)^{-B}),
$$
where
$I_4 < 1.0914$, $I_7 < 0.0045 $.
Let $\alpha^{+} = 1 + I_4 + I_7$. Hence
$$
S^{+}(\cA) = \alpha^{+}S(\cB, (2X)^{\frac12}) + O(X(\log X)^{-B}) < 2.0959 \cdot S(\cB, (2X)^{\frac12}).
$$
Now $\alpha^{+} \leq 2.0959 < 3 \alpha^{-}$, and Theorem \ref{LUthm} (iv) follows.

Now we can deduce arithmetic progressions cases of (\ref{Compare}).

\begin{lemma}\label{ApUpp}
For any $1 \leq q \leq \log^{C} X$ with $C \geq 2$ and $b \in [q]$, we have for any $B>0$,
\begin{equation}\label{UppApExp}
\sum_{\substack{n \in (X,2X] \cap \N^c \\ n \equiv b \pmod{q}}}cn^{1-\frac{1}{c}}\rho^{+}(n)e(n \theta) = \sum_{\substack{n \in (X,2X] \\ n \equiv b \pmod{q}}}\rho^{+}(n)e(n \theta) + O\left(\frac{X}{\log^B X}\right).   
\end{equation}
Furthermore, letting $\theta = 0$ and $(b,q)=1$, 
\begin{align}
\sum_{\substack{n \in (X,2X] \cap \N^c \\ n \equiv b \pmod{q}}}cn^{1-\frac{1}{c}}\rho^{+}(n) & = \sum_{\substack{n \in (X,2X] \\ n \equiv b \pmod{q}}}\rho^{+}(n) + O\left(\frac{X}{\log^B X}\right) \label{UppApNoExp}\\
& = \frac{\alpha^{+}}{\phi(q)}\int_{X}^{2X} \frac{1}{\log t} dt + O\left(\frac{X}{\log^B X}\right).\label{SiegelWalAp}
\end{align}
\end{lemma}
\begin{proof}
We have, by orthogonality of additive characters and (\ref{Compare}), 
\begin{align}
& \sum_{\substack{n \in (X,2X] \cap \N^c \\ n \equiv b \pmod{q}}}cn^{1-\frac{1}{c}}\rho^{+}(n)e(n \theta)  = \sum_{n \in (X,2X] \cap \N^c }cn^{1-\frac{1}{c}}\rho^{+}(n)e( \theta n) \cdot \frac{1}{q} \sum_{k=1}^{q}e\left(k \frac{n-b}{q}\right)\notag\\
 = & \frac{1}{q}\sum_{k=1}^{q}e\left(\frac{-bk}{q} \right)\sum_{n \in (X,2X] \cap \N^c }cn^{1-\frac{1}{c}}\rho^{+}(n)e\left( n\left(\theta + \frac{k}{q} \right) \right)\notag\\
 = &  \sum_{\substack{n \in (X,2X]  \\ n \equiv b \pmod{q}}}\rho^{+}(n)e(n \theta) + O\left(\frac{X}{\log^B X} \right).\label{SumRhoExp}
\end{align}
Hence (\ref{UppApExp}) and (\ref{UppApNoExp}) follow, and (\ref{SiegelWalAp}) follows by the Siegel-Walfisz Theorem.
\end{proof}

\begin{Rem}\label{rem4}
   If one studies Fourier transform for lower bound sieve as Matom\"aki and Shao \cite{MS17}, one can only get an asymptotic formula similar to (\ref{Compare}) for $1 < c < 8/7$ since Kumchev's argument \cite{K97} for type I and II sums required that $1<c<8/7$ and also Balog-Friedlander's argument \cite{BF92} gave a rather narrow Type I estimate, so we cannot do Buchstab's decomposition twice.
\end{Rem}

\section{Upper bound Fourier transform}\label{UBFourier}
\setcounter{lemma}{0} \setcounter{theorem}{0}
\setcounter{equation}{0}
In this section, we will verify Theorem \ref{LUthm} (\ref{UppForRestrction}) and (\ref{PsedoRandom}). The arguments in this section follow standard exponential sum estimates, which can be found, e.g., in \cite[Chapters 1-3]{Vau97}. For completeness, we provide the full details of the proofs.

Suppose that $\theta \in \T$. We first note that $1 \leq W \ll \log X$ and $(l,W)=1$ in Theorem \ref{LUthm}. By Lemma \ref{ApUpp},
$$
\sum_{\substack{n \in (X,2X] \cap \N^c \\ n \equiv l \pmod{W}}}cn^{1-\frac{1}{c}}\rho^{+}(n)e(n \theta) \quad \text{and} \quad 
\sum_{\substack{n \in (X,2X] \\ n \equiv l \pmod{W}}}\rho^{+}(n)e(n \theta)
$$
equal apart from an acceptable error. Thus next we just consider the approximation of $\sum_{\substack{n \in (X,2X]  \\ n \equiv l \pmod{W}}}\rho^{+}(n)e(n \theta)$.
We divide the torus $\T$ into major arcs and minor arcs. For the minor arc case, we use the standard type I and II estimates. For the major arc case we use Lemma \ref{ApUpp}.

Let $P$ and $Q$ be as in Theorem \ref{primecWnbT}. Recall that for $(a,q)=1$,
$$
\M_{a,q}=\left[\frac{a}{q}-\frac{1}{qQ}, \frac{a}{q}+\frac{1}{qQ}\right],
$$
and the major arcs and minor arcs are
$$
\M = \bigcup_{1 \leq q \leq P}\bigcup_{\substack{a=0 \\(a,q)=1}}^{q-1} \M_{a,q}, \quad \text{ and } \quad \m = \T \setminus \M.
$$
We will prove Theorem \ref{primecWnbT} (ii) and Theorem \ref{LUthm} (ii) for $\theta \in \m$ in Section \ref{minor} and Theorem \ref{primecWnbT} (i) and Theorem \ref{LUthm} (ii) for $\theta \in \M$ in Section \ref{major}. Recalling that Theorem \ref{LUthm} (iii) and (iv) were established in Section \ref{LBsieve} and \ref{UBsieve}, this finishes the proof of Theorem \ref{LUthm} and thus the proof of Theorem \ref{main}.
\subsection{Minor arc case}\label{minor} In this subsection, we will prove Theorem \ref{primecWnbT} (ii) and Theorem \ref{LUthm} (ii) for $\theta \in \m$. Namely, we will study, for $\theta \in \m$, the exponential sum
$$
\sum_{\substack{n \in (X,2X]  \\ n \equiv l \pmod{W}}}\rho^{+}(n)e(n \theta).
$$
In order to estimate this exponential sum, we need the following type I and II information. One can use the standard methods to get these two lemmas, see e.g. \cite[Chapter 8]{IK}.
\begin{lemma}\label{VauTypeI}
Let $X,N,W$ and $A$ be as in Theorem \ref{primecWnbT}. Suppose that $(c,q)=1$, $M \leq X^{1 - \epsilon}$, for any $\epsilon>0$, $MK \asymp X$ and $a_m \ll d^{O(1)}(m)$. Then for all $\theta \in \m$, we have
$$
 \sum_{\substack{m \sim M \\ k \sim K \\mk \equiv c \pmod{q}}}a_me(mk \theta)  \ll \frac{X}{\log^{2A} X} \ll \frac{N}{\log^A N}.
$$
\end{lemma}

\begin{lemma}\label{VauTypeII}Let $X,N,W$ and $A$ be as in Theorem \ref{primecWnbT}. Suppose that $(c,q)=1$ and $X^{\epsilon} \leq M \leq X^{1- \epsilon}$, for some $\epsilon>0$, $MK \asymp X$ and $a_m, b_m \ll d^{O(1)}(m)$. Then for all $\theta \in \m$, we have
$$
\sum_{\substack{m \sim M \\ k \sim K \\mk \equiv c \pmod{q}}}a_mb_ke(mk \theta)\ll \frac{X}{\log^{2A} X} \ll \frac{N}{\log^A N}.
$$
\end{lemma}
\begin{proof}[{\bf Proof of Theorem \ref{primecWnbT} (ii)}] Recalling (\ref{DefRho+}), we have
\begin{align*}
& \sum_{\substack{n \in (X,2X]  \\ n \equiv l \pmod{W}}}\rho^{+}(n)e(n \theta)\\
= & \sum_{\substack{n \in (X,2X]  \\ n \equiv l \pmod{W}}}\rho(n)e(n \theta) +\sum_{\substack{n \in (X,2X]  \\ n \equiv l \pmod{W}}} \rho_4(n)e(n \theta) +\sum_{\substack{n \in (X,2X]  \\ n \equiv l \pmod{W}}} \rho_7(n)e(n \theta)\\
= & S_{\rho} + S_{\rho_4} + S_{\rho_7}.
\end{align*}
We can use Vaughan's identity, see \cite[Chapter 13]{IK} to decompose $S_{\rho}$ into type I and II sums and then use Lemmas \ref{VauTypeI} and \ref{VauTypeII}. We can directly apply Lemma \ref{VauTypeII} for $S_{\rho_4}$ and $S_{\rho_7}$, since they can be decomposed into type II sums, e.g. see (\ref{ExampleRho4}).
\end{proof}

\begin{proof}[{\bf Proof of Theorem \ref{LUthm} (ii) for $\theta \in \m$}] By Lemma \ref{VauTypeI}, we have 
$$\sum_{\substack{n \sim X \\ n \equiv l \pmod{W}}}e(n \theta) \ll \frac{X}{\log^2 X}.
$$
Combining the above with the fact that Theorem \ref{primecWnbT} (ii) holds, our claim follows.
\end{proof}

\subsection{major arc case}\label{major}
In this subsection, we will prove Theorem \ref{primecWnbT} (i) and Theorem \ref{LUthm} (ii) for $\theta \in \M$ .

\begin{proof}[\bf{Proof of Theorem \ref{primecWnbT} (i)}]
Let $Y = \eta X$ with $\eta \in [{1}/{\log^A X},1]$. Recall that $\rho^{+}(n) = 0$ if $n$ has factors less than $X^{\frac{5}{41}}$. Let $1 \leq q \leq P$, $(a,q)=1$ and $(l,W)=1$. By a short interval version of  (\ref{SiegelWalAp}) (the interested reader can proved a short interval version of (\ref{SiegelWalAp}) by following the same arguments in Section \ref{UBsieve}) with $B=100A$,
\begin{align}
& \sum_{X<Wn+l\leq X+Y} \rho^{+}(Wn+l)e\left(\frac{a}{q}n\right)=\sum_{\substack{r=1 \\(Wr+l, q)=1}}^{q} e\left(\frac{a}{q}r\right)\sum_{\substack{  X < Wn+l \leq  X + Y \\ n \equiv r \pmod{q}} }\rho^{+}(Wn+l) \notag\\
 = & \frac{\alpha^{+}}{\phi(Wq)}\left(\int_{X}^{X+Y} \frac{1}{\log t} dt \right)\sum_{\substack{r=1 \\ (Wr+l, q)=1}}^{q} e\left(\frac{a}{q}r\right) + O\left(\frac{Y}{\log^{100A} N}\right) \label{CompareShortRhoE} .
\end{align}

Let $\theta \in \M_{a,q}$ for some $(a,q)=1$ and $1 \leq q \leq P$, and write $\lambda = \theta - a/q$. Now we compare 
$$\sum_{ Wn+l \sim X } \rho^{+}(Wn+l)e(\theta n)
\quad
\text{with} 
\quad
\frac{\alpha^{+}}{\phi(Wq)} \sum_{\substack{r=1 \\ (Wr+l, q)=1}}^{q} e\left(\frac{a}{q}r\right)\sum_{n \sim X}\frac{e(\lambda n)}{\log n}.
$$
To do this, let
$$
f(n) =  \rho^{+}(n)e\left(\frac{a}{q}\frac{(n-l)}{W}\right)\1_{n \equiv l \pmod{W}} - \frac{\alpha^{+}}{\phi(Wq) \log n} \sum_{\substack{r=1 \\ (Wr+l, q)=1}}^{q} e\left(\frac{a}{q}r\right).
$$
By partial summation
$$
\sum_{n \sim X} f(n) e(\lambda n) =  e(2 \lambda X)\sum_{n \sim X}f(n) - 2 \pi i\int_{X}^{2X} \lambda e(\lambda t) \sum_{X \leq n \leq t} f(n)dt.
$$
Since $$\lambda \leq \frac{1}{qQ}\leq \frac{\log^{50A} N}{N},$$ we have by (\ref{CompareShortRhoE})
$$
\sum_{n \sim N} f(n) e(\lambda n)  \ll \frac{X}{\log^{50A} X} \ll \frac{N}{\log^{49A} N}.
$$
Thus
\begin{equation}\label{SumRho+Wn+lE}
\sum_{ n \sim N} \rho^{+}(Wn+l)e(\theta n) = \frac{\alpha^{+}}{\phi(Wq)} \sum_{\substack{r=1 \\ (Wr+l, q)=1}}^{q} e\left(\frac{a}{q}r\right)\sum_{n \sim X}\frac{e(\lambda n)}{\log n} + O\left(\frac{N}{\log^{49A} N}\right).
\end{equation}
Let $G(t) = \sum_{n \leq t} e(\lambda n)$. By partial summation 
$$
\sum_{n \sim X}\frac{e(\lambda n)}{\log n}
= \frac{1}{\log 2X}G(2X) + \int_{X}^{2X}\frac{G(t)}{t \log^2 t}dt
\ll \frac{1}{\log X} \min\{X, \lambda^{-1}\}.
$$
On the other hand, since $(l,W)=1$, we have
$$
\sum_{\substack{r=1 \\ (Wr+l, q)=1}}^{q} e\left(\frac{a}{q}r\right)  = \sum_{d \mid q  }\mu(d)\sum_{\substack{r=1 \\ d \mid Wr+l}}^{q}e\left(\frac{a}{q}r\right)= \sum_{\substack{d \mid q \\ (d,W)=1}}\mu(d)\sum_{\substack{r=1 \\ r \equiv -l\overline{W_d} \pmod{d}}}^{q}e\left(\frac{a}{q}r\right),
$$
where $\overline{W_d}W \equiv 1 \pmod{d}$. The last sum on the right hand side vanishes unless $d=q$. The case $d=q$ is possible only when $(q,W)=1$. Hence
\begin{equation}\label{SumrqE}
\sum_{\substack{r=1 \\ (Wr+l,q)=1 }}^{q}e\left(\frac{a}{q}r\right) =  \begin{cases}
\mu(q)e\left(-\frac{l\overline{W}a}{q}\right) & \text{if } (W,q)=1,\\
0 & \text{otherwise}.
\end{cases}    
\end{equation}
From (\ref{SumRhoExp}), (\ref{SumRho+Wn+lE}) and (\ref{SumrqE}), we have for $\theta \in \M_{a,q}$,
\begin{equation}\label{DyadicNuWXb}
\sum_{n \sim N}\nu_{W,X,b}(n)e(n\theta) \ll \frac{N}{\phi(q)(1 + N\|\theta - a/q\|)} + O\left(\frac{N}{\log^A N}\right).    
\end{equation}
\end{proof}
\begin{Rem}
One may find the dyadic sum (\ref{DyadicNuWXb}) is different from $\hat{\nu}(\theta)$ and when sum over all $N \in [2^j,2^{j+1})$ for $j \ll \log N$ there might be one more $\log N$ factor on the right hand side in (\ref{DyadicNuWXb}). However, if we estimate the sum in (\ref{SumRho+Wn+lE}) over $N/\log^A N \leq n \leq N$ directly instead of using dyadic argument and for $1 \leq n \leq N/\log^A N$ we use the trivial upper bound, then we have
$$
\widehat{\nu}_{W,X,b}(\theta) \ll \frac{N}{\phi(q)(1 + N\|\theta - a/q\|)} + O\left(\frac{N}{\log^A N}\right).
$$
\end{Rem}
Next we will verify Theorem \ref{LUthm} (ii) for $\theta \in \M$.
\begin{proof}[\bf{Proof of Theorem \ref{LUthm} (ii) for $\theta \in \M$}]
Since $\rho^{+}(n) = 0$ if $n$ has factors less than $X^{\frac{5}{41}}$, we have $(n, qW)=1$. Writing $\theta = {a}/{q} + \lambda$, thus
\begin{equation}\label{RhoEnTheta}
\sum_{\substack{n \sim X\\ n \equiv l \pmod{W}}} \rho^{+}(n)e(n \theta)
= \sum_{\substack{r \pmod{[W,q]} \\ r \equiv l \pmod{W} \\ (r,Wq)=1} }e\left(\frac{ar}{q}\right) \sum_{\substack{n \sim X \\ n \equiv r \pmod{[W,q]}}}\rho^{+}(n)e(\lambda n).   
\end{equation}

Next we follow Green's argument \cite{G05} by dividing $\{n \in (X,2X]: n \equiv r \pmod{[W,q]}\}$ 
into $T$ arithmetic progressions $P_i$ for $1 \leq i \leq T$ such that these $P_i$ have common difference $[W,q]$ and about equal length. $T$ will be chosen later. 

Let $x_i$ be any element in $P_i$. By (\ref{SiegelWalAp})
$$
\sum_{n \in P_{i}}\rho^{+}(n) = \frac{\alpha^{+}}{\log X}\frac{[W,q]}{\phi([W,q])}|P_i| + O\left(\frac{[W,q]|P_i|}{\phi([W,q])\log^2 X}\right).
$$
Hence, the inner sum on the right hand side of (\ref{RhoEnTheta}) equals
\begin{align}
& \notag \sum_{\substack{n \sim X \\ n \equiv r \pmod{[W,q]}}}\rho^{+}(n)e(\lambda n) = \sum_{i = 1}^{T} \sum_{n \in P_i} \rho^{+}(n)e(\lambda n) \\
= & \label{CompareExp} \sum_{i = 1}^{T} e(\lambda x_i)\sum_{n \in P_i} \rho^{+}(n) + \sum_{i = 1}^{T} \sum_{n \in P_i} \rho^{+}(n)\left(e(\lambda n)-e(\lambda x_i) \right)\\
= & \notag \frac{\alpha^{+}}{\log X}\frac{[W,q]}{\phi([W,q])}\sum_{i=1}^{T}e(\lambda x_i)|P_i| + O\left(\frac{1}{\phi([W,q])}\frac{X}{\log^2 X}  \right) + O\left(\frac{1}{\phi([W,q])}\frac{X}{\log X}\frac{\lambda X}{T} \right).
\end{align}
Note that $|\lambda| \leq 1/(qQ) \leq {\log^{50A} N}/{N}$. In order to make the last error term small, we choose $T > \log^{50A+1} N$. Similarly,
$$
e(\lambda x_i)|P_i| = \sum_{n \in P_i}e(\lambda n) + \sum_{n \in P_i}(e(\lambda x_i) - e(\lambda n)) = \sum_{n \in P_i}e(\lambda n) + O\left(\frac{\lambda X|P_i|}{T}\right).
$$
Thus, we have
$$
\sum_{\substack{n \sim X \\ n \equiv r \pmod{[W,q]}}}\rho^{+}(n)e(\lambda n)=  \frac{\alpha^{+}}{\log X}\frac{[W,q]}{\phi([W,q])}\sum_{\substack{n \sim X \\ n \equiv r \pmod{[W,q]}}}e(\lambda n) + O\left(\frac{1}{\phi([W,q])}\frac{X}{\log^2 X}  \right).
$$
Thus
\begin{equation}\label{WqArgument}
\sum_{\substack{n \sim X \\ n \equiv l \pmod{W}}} \rho^{+}(n)e(n \theta) = \frac{\alpha^{+}}{\log X}\frac{[W,q]}{\phi([W,q])}\sum_{\substack{n \sim X \\ n \equiv l \pmod{W} \\  (n,Wq)=1}}e(n \theta) + O\left(\frac{\phi(q)}{\phi([W,q])}\frac{X}{\log^2 X} \right).    
\end{equation}
Comparing (\ref{WqArgument}) to Theorem \ref{LUthm} (ii), we just need to show that 
\begin{enumerate}[(a)]
    \item 
    $$
    \frac{[W,q]}{\phi([W,q])} = (1+o(1))\frac{W}{\phi(W)};
    $$
    \item 
    $$
    \sum_{\substack{n \sim X \\ n \equiv l \pmod{W} \\ (n,q) \neq 1}}1 = o\left (\frac{X}{W} \right).
    $$ 
\end{enumerate}
To prove (a) and (b), we need to consider the number of prime factors of $q$.

Recall that $W = \prod_{p \leq w}p$, with $w = 0.1\log\log X$ and $1 \leq q \leq P =\log^{10A} N$. Let $k$ denote the number of prime factors of $q$ that are at least $w$ and let $p_i$ denote the $i$-th prime number. Then
$$
k \log w \leq \sum_{i=\pi( w)+1}^{k+\pi(w)}\log p_{i} \leq \sum_{\substack{p|q \\ p>w}} \log p \leq \log q.
$$
Hence, 
$$
k \leq \frac{\log q}{\log w} \ll_A \frac{\log\log X}{\log\log\log X}
$$
and
\begin{equation}\label{Mertens}
\sum_{\substack{p \mid q \\ p > w}} \frac{1}{p} \leq \sum_{\pi(w)<i \leq k+\pi(w)}\frac{1}{p_i} \leq \frac{k}{w} \ll _{A} \frac{1}{\log\log\log X}.
\end{equation}
By (\ref{Mertens}), (a) follows since 
\begin{align*}
&\frac{[W,q]}{\phi([W,q])} = \frac{W}{\phi(W)}\prod_{\substack{p \mid q \\ p > w}} \left(1 -\frac{1}{p} \right)^{-1}\\ =&\frac{W}{\phi(W)} + O\left(\exp\left({\sum_{\substack{p \mid q \\ p > w}}\frac{1}{p}}\right) -1\right)\frac{W}{\phi(W)} = (1+o(1))\frac{W}{\phi(W)}.
\end{align*}
By the Chinese reminder theorem and (\ref{Mertens}), (b) follows since
$$
\sum_{\substack{n \sim X \\ n \equiv l \pmod{W} \\ (n,q) \neq 1}} 1 \leq \sum_{\substack{p \mid q \\ p>w}} \sum_{\substack{n \sim X \\ n \equiv l \pmod{W} \\ n \equiv 0 \pmod{p}}}1 \ll  \frac{X}{W}\sum_{\substack{p \mid q \\ p>w} }\frac{1}{p} = o\left( \frac{X}{W}\right).
$$
\end{proof}

\section{Roth's theorem for Piateski-Shapiro primes}\label{Roth}
\setcounter{lemma}{0} \setcounter{theorem}{0}
\setcounter{equation}{0}
In this section, we will prove Theorem \ref{ApRWrang} by employing another version of the transference principle given by Green-Tao \cite[Proposition 5.1]{GT06}.
\begin{proposition}[A transference principle for 3-term arithmetic progressions]\label{GT3termTran}
Let $N$ be a large prime. Suppose that $f: \Z_{N} \to \R_{+}$ satisfies the following conditions:
\begin{enumerate}[(i)]
    \item({\bf mean condition}) $\mathbb{E}_{n \in \Z_{N}}f(n) \geq \delta$, for some $\delta \in (0,1]$;
    \item({\bf pseudorandomness}) There exists a majorant $\nu:\Z_{N} \to \R_{+}$ with $f \leq \nu$ pointwise, such that $\|\widehat{\nu} - \widehat{\1}_{[N]}\|_{\infty} = o(N)$;
    \item ({\bf restriction estimate}) We have 
    $$
    \sum_{a \in \Z_{N}}\left|\sum_{n \in \Z_{N}}f(n)e\left(n \frac{a}{N} \right) \right|^q \leq K N^{q-1},
    $$
    for some fixed $q,K$ with $K \geq 1$ and $2<q<3$. 
\end{enumerate}
Then we have
$$
\E_{n,d \in \Z_{N}}f(n)f(n+d)f(n+2d) \geq c(\delta)
$$
where $c(\delta)>0$ is a constant depending only on $\delta$.
\end{proposition}
Although Proposition \ref{GT3termTran} concerns $\Z_{N}$, we will choose $f$ so that the domain of $f$ is a subset of $(N/4,N/2)$, and then the result of Proposition \ref{GT3termTran} is no different from the integer case.

Before proving Theorem \ref{ApRWrang}, we need to construct $\rho^{-}$ and $\rho^{+}$ as in Sections \ref{LBsieve} and \ref{UBsieve} to prove a lower bound result for the sum of $\rho(n)$ over $n \leq N$ in arithmetic progressions and pseudorandomness for $\rho^{+}(n)$.

Let $\gamma = 1/c \in ({205}/{243},1)$. Using the arguments of Rivat and Wu \cite{RW01}, one can prove an arithmetic progressions version of \cite[(1.3)]{RW01}. Specifically, for any $\eta \in (0,1]$, $d \leq \log X$, $(l,d)=1$ and arithmetic progression $P \subset \{n \in [X]: n \equiv l \pmod{d}\}$ with length at least $\eta \frac{X}{d}$, we have
\begin{equation}\label{3termMean}
\sum_{n \in P \cap \N^c} \rho(n) \geq \alpha^{-} \frac{d}{\phi(d)}\frac{|P|}{cX^{1-\frac1c}\log X}, 
\end{equation}
where $\alpha^{-} > 0$ .

We choose $\rho^{+} = \rho(n,z)$ as an upper bound sieve with $z = X^{6 \gamma-5}= X^{\frac{5}{81}}$, and the corresponding coefficient $$0<\alpha^{+} = \frac{81}{5} \omega\left(\frac{81}{5}\right)<10.$$ 
\begin{Rem}
The choice of $\rho^{+}$ is quite flexible because the mean condition of the main condition of Proposition \ref{GT3termTran} only requires a positive density, meaning that $\alpha^{-}/\alpha^{+}>0$ is sufficient.  
\end{Rem}
Now we can prove Theorem \ref{ApRWrang}.
\begin{proof}[Proof of Theorem \ref{ApRWrang}]
Let $\delta > 0$ be the relative upper density of $B \subset \N^c \cap \cP$,  $X>0$ be sufficiently large such that
\begin{equation}\label{limsupB}
\frac{B \cap [3X/10, 2X/5]}{|\N^c \cap \cP \cap [X]|} \geq \frac{1}{11} \delta. 
\end{equation}
Let $w = 0.01\log \log X$ and $W = \prod_{p\leq w}p$. By the pigeonhole principle, we can choose one $b \in [W]$ with $(b,W)=1$ such that 
\begin{equation}\label{densityBap}
|B \cap \{Wn+b: X/4 \leq Wn+b \leq X/2\}| \geq \frac{1}{2}\frac{1}{\phi(W)}\left|B \cap [3X/10, 2X/5]\right|.
\end{equation}

In the proof of Theorem \ref{main}, we first construct $f$ and $\nu$ similar to (\ref{Deff}) and (\ref{Defnu}). Let $N$ be a large prime such that $WN+b \in (0.99X,1.01X)$.
 For $c \in (1, {243}/{205})$, we define $f,\nu:[N] \to \R_{+}$ through
$$
f(n)=
\begin{cases}
    c \cdot\frac{\log X}{\alpha^{+}} \cdot \frac{\phi(W)}{W}(Wn + b)^{1- \frac{1}{c}}\rho(Wn+b), & \text{ if } Wn+b \in B \cap [3X/10, 2X/5],  \\
    0, & \text{ otherwise}.
\end{cases}
$$
and
$$
\nu(n)=
\begin{cases}
c\cdot\frac{\log X}{\alpha^{+}} \cdot \frac{\phi(W)}{W}(Wn + b)^{1- \frac{1}{c}}\rho^{+}(Wn+b), & \text{ if } Wn+b \in \N^{c},  \\
    0, & \text{ otherwise}.
\end{cases}
$$
By (\ref{3termMean}) with $\eta=1$, $d = W$, (\ref{densityBap}) and (\ref{limsupB}), we have 
\begin{align*}
\sum_{n \in [N]} f(n) & \gg_{c} X^{1-\frac1c}\log X \frac{\phi(W)}{W} \sum_{Wn+b \in B \cap [3X/10, 2X/5]} \rho(Wn+b)\\
& \gg_{c}  \frac{X^{1-\frac1c}\log X}{W} |B \cap [3X/10, 2X/5]|\\
& \gg_{c,\delta} \frac{X^{1-\frac1c}\log X}{W} |\N^c \cap \cP \cap [X]|.
\end{align*}
By (\ref{3termMean}) again with $\eta=1$, $d = 1$, we obtain
$$
\sum_{n \in [N]} f(n) \gg_{c, \delta} \frac{X}{W} \gg_{c, \delta} N,
$$
which implies Proposition \ref{GT3termTran} (i). Proposition \ref{GT3termTran} (ii) is a weak version of Theorem \ref{LUthm} (ii), so one can prove it as in Section \ref{UBFourier}. Proposition \ref{GT3termTran} (iii) looks slightly different from Proposition \ref{Tran} (iii). However, one can replace $[0,1)$ with $1/N$-separated points $\{0,\frac{1}{N}, \dots, \frac{N-1}{N}\}$ and then follow our arguments for proving Theorem \ref{primecWnbT} and Theorem \ref{LUthm} (i) to prove a $\mathbb{Z}_{N}$ analogy of restriction estimates. For some $K \geq 1$ and fixed $2 < q < 3$ depending on $c$, we have
$$
\sup_{\theta \in \T} \sum_{a \in \Z_{N}}\left|\sum_{n \in \Z_{N}}f(n)e\left(n \frac{a}{N} + n \theta \right) \right|^q \leq K N^{q-1}.
$$
Now we apply Proposition \ref{GT3termTran} getting that 
$$
\sum_{n,d \in \Z_{N}}f(n)f(n+d)f(n+2d) \gg N^2.
$$
Note that $f$ is supported on $(N/4,N/2)$, so $n,n+d,n+2d \in (N/4,N/2)$. By $n,n+2d \in (N/4,N/2)$ we have that $n \in (N/4, N/2)$ and $d \in [0,N/8) \cup (3N/8,5N/8)$. If $d \in (3N/8,5N/8)$, then $n+d \in (5N/8, N] \cup [0, N/8)$ which is a contradiction. Hence,
$$
N^2 \ll \sum_{n, d \in \Z_{N}}f(n)f(n+d)f(n+2d) =  \sum_{\substack{n \in (N/4,N/2) \\ d \in [0, N/8)}}f(n)f(n+d)f(n+2d).
$$
Consequently, there is a non-trivial three term arithmetic progression $Wn+b, W(n+d)+b, W(n+2d)+b \in B$, and our claim follows.
\end{proof}



\begin{thebibliography}{99}

\bibitem{BHR95}  R. C. Baker, G. Harman, J. Rivat, {\it Primes of the form $\lfloor n^c \rfloor$}, J. Number Theory {\bf 50} (1995), 261-277.

\bibitem{BF92} A.Balog, J. Friedlander,
{\it A hybrid of theorems of Vinogradov and Piatetski-Shapiro},
Pacific J. Math. {\bf 156} (1992), 45-62.

\bibitem{B89} J. Bourgain, {\it On $\Lambda(p)$-subsets of squares}, Israel J. Math., {\bf 67} (1989), 291-311.

\bibitem{Ca03} Y.-C. Cai, {\it A remark on the Piatetski-Shapiro-Vinogradov theorem}, Acta Arith., {\bf 110} (2003), 73-75.

\bibitem{GK91}S. W. Graham, G. Kolesnik, {\it van der Corput's method of exponential sums},  London Mathematical Society Lecture Note Series, 126. Cambridge University Press, Cambridge, 1991.

\bibitem{G05} B. Green, {\it Roth's theorem in the primes}, Ann. of Math. (2), {\bf 161} (2005), 1609-1636.

\bibitem{GT06} B. Green, T. Tao, {\it Restriction theory of the Selberg sieve, with applications},
J. Th\'eor. Nombres Bordeaux, {\bf 18} (2006),  147-182.

\bibitem{Gr} L. Grimmelt, {\it Vinogradov's Theorem with Fouvry-Iwaniec Primes}, preprint, arXiv:1809.10008.

\bibitem{Ha07} G. Harman, {\it Prime-detecting sieves},
London Mathematical Society Monographs Series, 33. Princeton University Press, Princeton, NJ, 2007. 

\bibitem{He83} D. R. Heath-Brown, {\it The Pjateckii-\u{S}apiro prime number theorem}. J. Number Theory. {\bf 16} (1983), 242-266.

\bibitem{He} H. Helfgott, {\it The ternary Goldbach conjecture is true}, preprint, arXiv:1312.7748.

\bibitem{IK}H. Iwaniec, E. Kowalski, {\it Analytic number theory}, 
American Mathematical Society Colloquium Publications, 53. American Mathematical Society, 2004.

\bibitem{Ji93} C. H. Jia, {\it On Piatetski-Shapiro's prime number theorem II}, Sci. China, {\bf 36} (1993),913-926.

\bibitem{Ji94} C. H. Jia, {\it On Piatetski-Shapiro's prime number theorem I}, Chinese Ann. Math. Ser. B,
{\bf 15} (1994), 9-22.

\bibitem{Ji95} C. H. Jia, {\it On the Piatetski-Shapiro-Vinogradov theorem}, Acta Arith., {\bf 73} (1995), 1-28.

\bibitem{K97} A. Kumchev, {\it On the Piatetski-Shapiro-Vinogradov theorem}, J. Th\'eor. Nombres Bordeaux, {\bf 9} (1997), 11-23.

\bibitem {K99} A. Kumchev, {\it On the distribution of prime numbers of the form $[n^c]$},
Glasg. Math. J. {\bf 41} (1999), 85–-102.

\bibitem{Li03} H.-Z. Li, {\it A hybrid of theorems of Goldbach and Piatetski-Shapiro}, Acta Arith., {\bf 107} (2003), 307-326.

\bibitem{LP10} H.-Z. Li and H. Pan, {\it A density version of Vinogradov's three primes theorem}, Forum Math., {\bf 22} (2010), 699-714.

\bibitem{MMS17} K. Matom\"aki, J. Maynard and X. Shao, {\it Vinogradov's theorem with almost equal summands}, Proc. Lond. Math. Soc., {\bf 115} (2017), 323-347.

\bibitem{MS17} K. Matom\"aki and X. Shao, {\it Vinogradov's three primes theorem with almost twin primes}, Compos. Math., {\bf 153} (2017), 1220-1256.

\bibitem{MW06} X.-M. Meng and M.-Q. Wang, Mingqiang, {\it A hybrid of theorems of Goldbach and Piatetski-Shapiro}, Chinese Ann. Math. Ser. B, {\bf 27} (2006), 341-352.

\bibitem{M15}M. Mirek,
{\it Roth's theorem in the Piatetski-Shapiro primes}, Rev. Mat. Iberoam. {\bf 31} (2015), 617-656.

\bibitem{Pe03} T. P. Peneva, {\it An additive problem with Piatetski-Shapiro primes and almost-primes}, Monatsh. Math., {\bf 140} (2003), 119-133.

\bibitem{PS53} I. I. Piatetski-Shapiro, {\it On the distribution of prime numbers in sequences of the form $[f(n)]$}, Mat. Sb., {\bf 33} (1953), 559-566.


\bibitem{Ri92} J. Rivat, {\it Autour d'un th\'eor\`eme de Pjateck\u\i-\u Sapiro: (nombres premiers dans la suite $[n^c]$)},Th\`ese de Doctorat (Universit\'e Paris–Sud,1992).

\bibitem{RW01} J. Rivat and  J. Wu, {\it Prime numbers of the form $[n^c]$}, Glasg. Math. J., {\bf 43} (2001), 237-254.

\bibitem{S20} J. Salmensuu, {\it On the Waring-Goldbach problem with almost equal summands}, 
Mathematika {\bf 66} (2020), 255-296.

\bibitem{S21} J. Salmensuu, {\it The Goldbach conjecture with summands in arithmetic
progressions}, arXiv:2106.00778

\bibitem{Sh14a} X. Shao, {\it An $L$-function-free proof of Vinogradov's three primes theorem}, Forum Math. Sigma, {\bf 2} (2014), e27, 26 pp.


\bibitem{Sh14b} X. Shao, {\it A density version of the Vinogradov three primes theorem}, Duke Math. J., {\bf 163} (2014), 489-512.

\bibitem{Sh16} Q.-L. Shen, {\it The ternary Goldbach problem with primes in positive density sets}, J. Number Theory, {\bf 168} (2016), 334-345.

\bibitem{Ter18} J. Ter\"av\"ainen {\it The Goldbach problem for primes that are sums of two squares plus one},
Mathematika {\bf 64} (2018), 20-70. 

\bibitem{Vau97}
R. C. Vaughan {\it The Hardy-Littlewood method}, Cambridge Tracts in Mathematics, 80. Cambridge University Press, Cambridge-New York, 1981.

\bibitem{WC11} X.-N. Wang and Y.-C. Cai, {\it An additive problem involving Piatetski-Shapiro primes}, Int. J. Number Theory, {\bf 7} (2011), 1359-1378.
\end{thebibliography}
\end{document}